\documentclass{amsart}
\usepackage{graphicx}
\usepackage[utf8]{inputenc} 
\usepackage[T1]{fontenc}    
\usepackage{hyperref}       
\usepackage{url}            
\usepackage{booktabs}       
\usepackage{amsfonts}       
\usepackage{nicefrac}       
\usepackage{microtype}      
\usepackage{xcolor}         
\usepackage{amsmath}
\usepackage{amssymb}
\usepackage[foot]{amsaddr}
\usepackage{mathtools}
\usepackage{amsthm}
\usepackage{multirow}
\usepackage{soul}
\usepackage[capitalize,noabbrev]{cleveref}
\usepackage{natbib}
\usepackage{dsfont}
\usepackage{float}
\usepackage{combelow}
\usepackage{caption}
\usepackage{subcaption}
\usepackage{stfloats}
\usepackage{listings}
\usepackage{comment}
\usepackage{wrapfig}
\usepackage[usenames,svgnames,dvipsnames]{xcolor}
\usepackage{bm}
\usepackage{bbm}
\usepackage{epsfig}
\usepackage{subcaption}
\usepackage{epstopdf}
\usepackage{latexsym}
\usepackage{enumerate}
\usepackage{tabularx}
\usepackage[left=1in, right=1in]{geometry}
\newtheorem{thm}{Theorem}[section]

\newtheorem{rem}[thm]{Remark}
\newtheorem{asm}[thm]{Assumption}
\numberwithin{equation}{section}
\usepackage[capitalize,noabbrev]{cleveref}
\usepackage{todonotes}

\newcommand*\xbar[1]{%
  \hbox{%
    \vbox{%
      \hrule height 0.5pt 
      \kern0.5ex
      \hbox{%
        \kern-0.1em
        \ensuremath{#1}%
        \kern-0.1em
      }%
    }%
  }%
}

 \title{Group Evolving Dynamics in Biased Condition: Modeling and Analysis}
\author{Samit~Ghosh}
\email[SG]{s2ghosh@odu.edu}

\address[SG]{Department of Mathematics and Statistics, Old Dominion University, Norfolk, VA 23529, USA}
\date{\today}

\begin{document}
\begin{abstract}
We propose a dynamical model for group formation and switching behavior in systems where each group competes for members through attraction functions that are inversely proportional to their current sizes. This attraction is modulated by group-specific bias terms, which can reflect social, economic, or reputational advantages. New entrants choose groups probabilistically based on these weighted attraction scores. We derive the conditions under which the system converges to a stationary equilibrium, where group proportions remain stable over time. The model exhibits rich nonlinear behavior, especially under varying bias strengths and inverse preference intensities. We analyze equilibrium conditions both theoretically and via simulations.\\[0.05in]
    \noindent
    \textbf{Keywords:} Group Formation, Equilibrium, Fixed point, Nonlinear behavior, Bias, Social Dynamics.
\end{abstract}
\maketitle
\section{\centering Introduction}\label{sec:Intro}
Understanding how individuals affiliate with groups is central to modeling a wide range of complex social, economic, and biological systems. From the formation of political coalitions and religious sects to the growth of online communities and consumer market segmentation, group dynamics shape collective behavior in profound and often unpredictable ways. These dynamics are rarely static; individuals frequently switch affiliations in response to factors such as group size, social influence, peer behavior, or perceived utility. Capturing such switching behavior is essential for explaining key social phenomena such as the persistence of diversity, the dominance of certain ideologies or brands, or the sudden collapse of seemingly stable configurations.

In many real-world settings, individuals may actively prefer smaller or less saturated groups in order to avoid overcrowding, competition, or dilution of identity. This behavior has been observed in organizational settings, niche consumer markets, and ecological systems, where the attractiveness of a group is often inversely related to its size. Such observations motivate models that incorporate \textit{inverse proportionality}, in which the likelihood of an individual joining a group decreases with that group's size—thereby promoting underrepresented or emerging groups. At the same time, individuals may exhibit \textit{group-specific biases} due to ideological, cultural, or historical affinities, further complicating the dynamics of affiliation and identity.

Our model seeks to formalize this interplay through a probabilistic group selection mechanism that blends mutual attraction (based on group proportions) with a tunable \textit{size-bias parameter}. This framework allows us to study not only equilibrium outcomes and long-run convergence behavior, but also the rich \textit{transient dynamics} that emerge when individuals repeatedly enter, leave, and reshape the system. Such a model has broad implications for understanding polarization, segregation, viral adoption, echo chambers, and coordination challenges in dynamic multi-agent environments.

The modeling of group formation and social dynamics has long attracted attention in mathematical sociology, statistical physics, and economics. Schelling's seminal work on spatial segregation \citep{schelling1971} illustrated how even mild individual preferences can lead to stark macro‑level segregation, laying the groundwork for many later models of emergent social structure. DeGroot's consensus model \citep{degroot1974} offered a formal approach to how beliefs evolve over time through repeated local averaging, establishing the foundation for a wide class of models in \textit{opinion dynamics}. To address the limitations of linear models like DeGroot's, Hegselmann and Krause introduced a \textit{bounded confidence} framework \citep{hegselmann2002}, where agents only interact with others whose opinions lie within a certain range, capturing how fragmentation and polarization can arise.

From a network science perspective, the emergence of \textit{scale-free networks} as described by Barabási and Albert \citep{barabasi1999} highlights how preferential attachment fosters hub formation and rich-get-richer dynamics, motivating the use of size-biasing in models of group affiliation. From a strategic and evolutionary standpoint, Taylor and Jonker’s replicator dynamics \citep{taylor1978} model how strategy distributions evolve based on relative payoff feedback, endowing our conceptualization of group "fitness" with adaptive underpinnings. Similarly, the concept of \textit{informational cascades} introduced by Bikhchandani et al. \citep{bikhchandani1992} explains how popular trends or ideologies emerge through sequential decision-making, even with incomplete information.

Recent work has further advanced these themes. Zabarianska and Proskurnikov (2024) extended opinion dynamics models to consider generalized, time-varying conformity weights, showing how group pressure robustly drives consensus under certain conditions \citep{zabarianska2024}. Sampson et al. (2024) introduced a hypergraph-based framework where both individual and small group (“polyadic”) opinions interact, leading to oscillatory and excitable dynamics akin to social fads \citep{sampson2024}. In computational modeling, Pasimeni et al. (2025) used agent-based simulations in a 2D opinion–space to explore clustering via attraction–repulsion, offering fresh insight into socio-spatial group formation \citep{pasimeni2025}. Kinetic, data-driven approaches (Albi et al. 2025) proposed models that couple opinion evolution with diffusion and control mechanisms to maintain diversity in social media-like systems \citep{albi2025}. A recent holistic review by Peralta et al. (2023) integrates multiple decision-making stages—from belief formation to diffusion to updating—providing a comprehensive framework for modeling group decisions in networks \citep{peralta2023}. Moreover, models incorporating dynamic interaction structures and radical agents have demonstrated the complex interplay between micro-level personal influence and macro-level polarization \citep{frontiers2025}.

In this work, we synthesize these diverse perspectives to propose a model that integrates mutual attraction, inverse group-size effects, and probabilistic decision-making. By combining elements from opinion dynamics, evolutionary game theory, network growth, and recent polyadic and kinetic formulations, our model seeks to characterize both equilibrium and transient behaviors in systems with dynamic group affiliations. We aim to provide theoretical insights into how social diversity is maintained, how dominant groups emerge, and how small perturbations can yield cascading shifts in group composition.

The proposed model of dynamic group formation builds upon several established streams of research, including reinforced stochastic processes, population dynamics, opinion formation, and probabilistic choice theory. We discuss these connections in the context of Equations~\eqref{eq:grp_prop}-~\eqref{eq:choosing_prob}.


Equation~\eqref{eq:grp_prop} defines the relative population proportion $\pi_k(t) = n_k(t)/N(t)$ of group $k$ at time $t$, analogous to formulations in evolutionary game theory and replicator dynamics \citep{sandholm2010population,taylor1978}. Similar normalization appears in generalized Pólya urn processes \citep{bandyopadhyay2018negative,kaur2019negatively, pemantle2007survey}, where proportions evolve through stochastic reinforcement updates. These frameworks provide the mathematical foundation for analyzing convergence and equilibrium stability of composition-based systems.


The mutual attraction matrix $M_{ij}(t)$ in Equation~\eqref{eq:Mutual_a_mat} introduces nonlinear pairwise coupling between groups. This mechanism blends cooperative and competitive interactions, reflecting both similarity and overlap among groups. Comparable constructs arise in sociophysics and opinion dynamics, notably in the bounded-confidence models of \citep{deffuant2000mixing} and \citep{hegselmann2002opinion}, where attraction strength depends on proximity of opinions. Normalization by $\max\{\pi_i,\pi_j\}$, however, distinguishes the present model by scaling interaction intensity relative to group dominance, a feature absent in classical formulations. The structure of $M_{ij}(t)$ also parallels affinity kernels used in social influence networks \citep{friedkin2011social} and modularity-based community models \citep{newman2006modularity}.


The cumulative attraction $\theta_k(t)$ in Equation~\eqref{eq:cum_attraction} aggregates the influence of all active groups on group $k$. This notion is conceptually related to potential fields in collective behavior models \citep{mas2013differentiation}, as well as to total influence scores in network-based formulations of social adaptation \citep{friedkin2011social}. The resulting attraction potential Equation ~\eqref{eq:Attraction_Potential}
introduces a tunable inverse-preference term controlled by $\beta$, which modulates the relative attractiveness of small versus large groups. This mechanism is reminiscent of negative reinforcement in generalized urn models \citep{kaur2019negatively} and of negative frequency-dependent selection in evolutionary theory \citep{christie2023nfds}, both of which promote diversity and equilibrium coexistence.


The choice probability in Equation~\eqref{eq:choosing_prob} follows a softmax form, where entrants select group $k$ proportionally to its attraction potential with additive stochastic perturbations $\epsilon_k \sim \mathcal{N}(\mu,\sigma^2)$. This structure is closely related to discrete-choice and logit models in econometrics \citep{mcfadden1974conditional}, and to quantal response equilibria in behavioral game theory \citep{mckelvey1995quantal}. Logit or softmax response rules are widely used in evolutionary population games to capture bounded rationality and exploration \citep{blume1993statistical, sandholm2010population}. The additive Gaussian noise ensures non-degenerate probabilities and continuous adaptation over time.


The power-law dependence $\pi_k^{-\beta}(t)$ links the model to anti-preferential attachment mechanisms studied in complex network growth, where new nodes favor connections to low-degree nodes rather than to hubs \citep{siew2020inverse}. For $\beta > 0$, this formulation produces negative feedback that stabilizes population proportions, analogous to negative frequency-dependent selection in ecology \citep{christie2023nfds}. Conversely, $\beta < 0$ induces positive reinforcement and potential monopolization dynamics, leading to rich nonlinear behaviors akin to bifurcations observed in reinforced stochastic processes \citep{bandyopadhyay2018negative}.


Overall, the present formulation integrates ideas from multiple domains: (i) population-based reinforcement and replicator dynamics, (ii) mutual-attraction coupling as in social and opinion models, (iii) inverse-preference mechanisms inspired by anti-preferential attachment, and (iv) probabilistic softmax choice inspired by discrete-choice theory. This synthesis allows the model to capture emergent equilibrium diversity and switching behavior within competitive group systems.

The paper is structured as follows: The \Cref{sec:Intro} outlines the problem motivation, reviews relevant literature, and states the main contributions. The \Cref{sec:Model} formally defines the proposed group dynamics model, detailing the underlying assumptions and mathematical framework. In \Cref{sec:stab}, we analyze the equilibrium properties and conditions for convergence of group proportions. The \Cref{sec:analy}  offers deeper theoretical insights into model behavior, including bifurcation phenomena and parameter sensitivity. The \Cref{sec:simu}  presents computational experiments that illustrate the dynamics under various scenarios and validate theoretical results. The \Cref{sec:summ} recaps the main findings and their implications. 
Finally, in \Cref{sec:lim} we claim our limitation and improvement of our model that is our next foucs.

\section{\centering Model Description}\label{sec:Model}

We consider a system of $K$ groups, where individuals arrive sequentially and probabilistically join one of the groups based on mutual attraction and a size-biasing mechanism.


Let $n_k(t)$ denote the number of individuals in group $k$ at time $t$, and let $N(t) = \sum\limits_{k=1}^K n_k(t)$ be the total number of individuals. The proportion of group $k$ is defined as:
\begin{equation}\label{eq:grp_prop}
    \pi_k(t) = \frac{n_k(t)}{N(t)}.
\end{equation}


Using \Cref{eq:grp_prop}, we define the mutual attraction (will be analyzed in \Cref{prop:concavity}) between group $i$ and group $j$ as:
\begin{equation}\label{eq:Mutual_a_mat}
    M_{ij}(t) = \frac{\pi_{i}^2(t) + \pi_{j}^2(t) - \pi_i(t)\pi_j(t)}{\max\{\pi_i(t), \pi_j(t)\}}.
\end{equation}

The denominator $\max\{\pi_i(t),\pi_j(t)\}$ equals zero only if 
$\pi_i(t) = \pi_j(t) = 0$, i.e., when both groups are empty. 
In this case, the numerator is also zero, leading to the indeterminate form $0/0$. Now consider

\medskip
\begin{equation*}
M_{ij}(t) \;=\;
\begin{cases}
\dfrac{\pi_{i}^2(t) + \pi_{j}^2(t)- \pi_i(t)\pi_j(t)}{\max\{\pi_i(t), \pi_j(t)\}}, 
& \text{if } \max\{\pi_i(t),\pi_j(t)\} > 0, \\[10pt]
0, & \text{if } \pi_i(t)=\pi_j(t)=0.
\end{cases}
\end{equation*}
Intuitively, two empty groups exert no mutual attraction. For epsilon smoothing of above formulation avoids case distinctions and is numerically stable.
\begin{equation*}
M_{ij}(t) \;=\;
\frac{\pi_{i}^2(t) + \pi_{j}^2(t) - \pi_i(t)\pi_j(t)}{\max(\pi_i(t), \pi_j(t)) + \varepsilon}, 
\qquad \varepsilon>0 \text{ small}.
\end{equation*}
$M_{ij}(t)$ only for the active set
$\{k : \pi_k(t) > 0\}$ and set $M_{ij}(t)=0$ whenever both groups are inactive. 
This is conceptually clean if the model allows birth processes for new groups. If one group is empty while the other is non-empty 
(e.g., $\pi_i(t)=0$, $\pi_j(t)>0$), then
\begin{equation*}
M_{ij}(t) = \pi_j(t).
\end{equation*}
Thus, an empty group still ``feels'' the presence of the other group through its cumulative attraction $\theta_k(t)$ ( \Cref{eq:cum_attraction}), and due to $\varepsilon>0$ in 
the choice model, it may still start attracting entrants. This formulation captures the synergy between larger groups while penalizing overlaps. Now, the \Cref{eq:Mutual_a_mat} helps us to define cumulative attraction of group $k$ is:
\begin{equation}\label{eq:cum_attraction}
 \theta_k(t) = \sum_{j=1}^K M_{kj}(t).   
\end{equation}
Then, \Cref{eq:cum_attraction} is key to define the attraction potential of group $k$ as:
\begin{equation}\label{eq:Attraction_Potential}
   a_{ki}(t) = \theta_k(t) \cdot \pi_{k}^{-\beta}(t) 
\end{equation}
where $\beta \geq 0$ (can be less than $0$ for richer model See \Cref{sec:simu}) is a tunable parameter controlling the preference for smaller groups. Finally, garbing the idea from \Cref{eq:Attraction_Potential} and considering the softmax property that the probability of a new entrant chooses group $k$ is:
\begin{equation}\label{eq:choosing_prob}
    p_{ik}(t) = \frac{a_{ki}(t) + \epsilon_k}{\sum_{j=1}^K (a_{ij}(t) + \epsilon_j)}
\end{equation}
where \(\epsilon_k \sim \mathcal{N}(\mu, \sigma^2) \quad \text{truncated to } [0, \infty)\) ensures that all groups maintain a nonzero probability.

We define the mutual attraction between group \( i \) and group \( j \) at time \( t \) as \Cref{eq:Mutual_a_mat}. This formulation captures synergy between group proportions while penalizing excessive overlap. A mathematical analysis of the curvature of \( M(x, y) \), with \( x = \pi_i \) and 
\( y = \pi_j \), \Cref{prop:concavity} shows that its Hessian matrix is degenerate: its determinant is zero at every interior point. This degeneracy means that \( M(x, y) \) is convex but not strictly convex, with one direction of curvature and one “flat” direction. Geometrically, the surface of \( M(x, y) \) has a ridge-like structure \Cref{fig:Mu_Attra}. As a result, the dynamics of group growth are \emph{path-dependent} and sensitive to initial conditions: small perturbations in early group composition can shift the long-term evolution toward different equilibrium configurations.

\begin{wrapfigure}{r}{0.5\textwidth}
\centering
\includegraphics[width=0.5\textwidth]{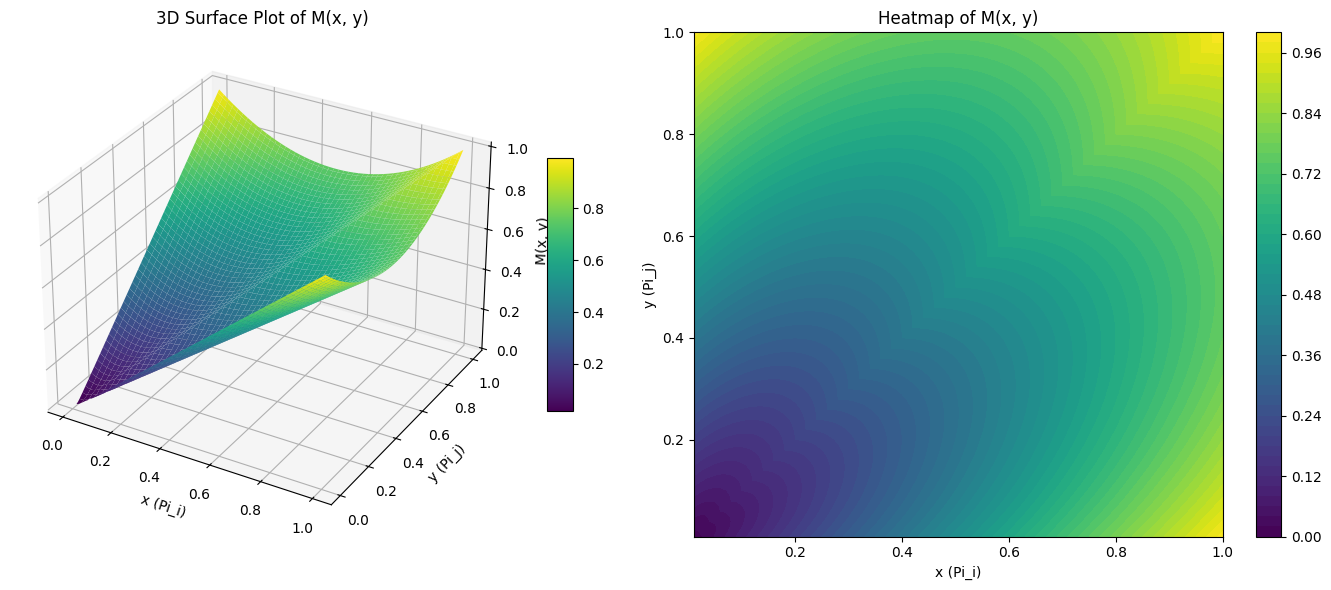}
\caption{Mutual Attraction for group formation}
\label{fig:Mu_Attra}
\end{wrapfigure}

Moreover, the ridge-like geometry of $M(x,y)$ (\Cref{fig:Mu_Attra}), which follows from the rank-one
degeneracy of its Hessian (\Cref{prop:concavity}), implies the presence of a neutral (flat) direction through every interior point of the state space. Rather than generating multiple metastable states, this structural feature produces neutral drift: curvature is positive in one direction but vanishes in another, so the dynamics do not pull trajectories toward a unique interior equilibrium. As a consequence, the system becomes path-dependent. Small differences in initial group proportions can direct the trajectory along different flat directions before boundary forces dominate, ultimately steering the system toward distinct boundary-dominated equilibrium configurations (see \Cref{sec:stab}).

While the degeneracy of the Hessian does not create multiple stable interior attractors, it does endow the system with flexibility in its transient behavior. Group proportions may remain diverse over intermediate time scales as trajectories drift along the flat direction of $M$. This neutral-drift mechanism allows the model to reproduce realistic patterns of temporarily stable coexistence and composition variability commonly observed in social, biological, and economic group-formation processes. However, in the long run, the weak convexity of $M$ ensures that the system does not settle into a single interior fixed point but instead evolves toward boundary equilibrium outcomes determined by initial conditions.

Let us denote $x = \pi_i(t)$ and $y = \pi_j(t)$, assuming $x \le y$. Then the equation \Cref{eq:Mutual_a_mat} reduces to
\[
M(x, y) = \frac{x^2 + y^2 - xy}{\max\{x,y\}}
         = \frac{x^2}{y} + y - x.
\]
The function $M(x,y)$ rewards larger group sizes through the quadratic terms $x^2 + y^2$, while the mixed term $-xy$ penalizes perfectly symmetric dominance. This structure implies that mutual attraction is highest when group sizes are uneven yet substantial, thereby encouraging heterogeneity rather than convergence to perfectly symmetric configurations.

The monotonicity properties of $M$ reflect this tension: the attraction increases with $x$ when $x < \frac{y}{2}$ and with $y$ when $x < y$, but these effects weaken as the two proportions approach one another. The second-order analysis (\Cref{prop:concavity}) shows that the Hessian of $M$ is positive semidefinite but degenerate, with rank one. Thus, $M$ is weakly convex with a flat direction through every interior point of the domain. Rather than producing saddle-like behavior, this geometry creates a neutral-gradient direction along which the dynamics exhibit path dependence. Small early variations in relative group sizes can steer the system along different flat trajectories before boundary effects dominate.

Consequently, the mutual attraction function $M$ encourages temporary diversity in group proportions and allows for slow drift along flat directions of the landscape, even though no interior equilibrium exists. Over longer horizons, the weak convexity of $M$ ensures that the dynamics ultimately move toward boundary-dominated equilibrium outcomes determined by initial conditions (see \Cref{sec:stab}).

\section{\centering Stability}\label{sec:stab}
To understand the evolution of group proportions over time, we investigate the stability and convergence properties of the underlying dynamics. The process describing how individuals join different groups based on mutual attraction and size bias evolves according to a stochastic update rule. Importantly, this process is Markovian \Cref{thm:Markov} in nature: the probability of a new entrant joining a group depends only on the current group sizes (or proportions), and not on the past history. This memoryless property makes the process analytically tractable and allows for the study of long-term behavior through standard tools from stochastic processes and dynamical systems.

\begin{thm}\label{thm:Markov}
The process $\{\pi(t)\}_{t \geq 0}$ is a Markov process.
\end{thm}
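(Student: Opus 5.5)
The plan is to prove the statement first for the count vector $n(t)=(n_1(t),\dots,n_K(t))$ and then transfer it to $\pi(t)$. I will make explicit the modelling conventions implicit in \Cref{sec:Model}: time is discrete, exactly one individual enters per step, so $N(t)=N(0)+t$ is deterministic; and the perturbations $\epsilon(t)=(\epsilon_1(t),\dots,\epsilon_K(t))$ are drawn afresh at each step, i.i.d.\ from the truncated $\mathcal{N}(\mu,\sigma^2)$ law, independently of $n(0),\dots,n(t)$ and of all earlier perturbations. Let $\mathcal{F}_t=\sigma(n(0),\epsilon(0),\dots,\epsilon(t-1))$ be the history up to time $t$.

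\textbf{Step 1 (one-step update as a measurable map).} Chaining \Cref{eq:grp_prop}, \Cref{eq:Mutual_a_mat} (with the stated convention $M_{ij}=0$ for two empty groups), \Cref{eq:cum_attraction}, \Cref{eq:Attraction_Potential} and \Cref{eq:choosing_prob}, I will check that the choice probabilities can be written as
\[
p_k(t)=F_k\bigl(\pi(t),\epsilon(t)\bigr)
\]
for a fixed Borel function $F$ not depending on $t$. The update is then $n(t+1)=n(t)+e_{\kappa(t)}$, where, conditionally on $\pi(t)$ and $\epsilon(t)$, the index $\kappa(t)$ is sampled with law $F(\pi(t),\epsilon(t))$, using fresh randomness independent of the past.

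\textbf{Step 2 (Markov property for $n$).} For any $k$, I will condition on $\mathcal{F}_t$ and integrate out $\epsilon(t)$:
\[
\mathbb{P}\bigl(n(t+1)=n(t)+e_k \mid \mathcal{F}_t\bigr)=\int F_k\bigl(n(t)/N(t),\,u\bigr)\,\nu(du)=:P_t\bigl(n(t),k\bigr).
\]
Here $\nu$ is the truncated Gaussian law. Independence of $\epsilon(t)$ from $\mathcal{F}_t$ justifies replacing conditional expectation by this integral. The right-hand side depends on the history only through $n(t)$, and on $t$ only through the deterministic $N(t)$, so $n$ is a Markov chain.

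\textbf{Step 3 (transfer to $\pi$).} Since $N(t)$ is deterministic, the map $n\mapsto n/N(t)$ is a bijection at each fixed $t$, so $\sigma(\pi(0),\dots,\pi(t))=\sigma(n(0),\dots,n(t))$. The new state is
\[
\pi(t+1)=\frac{N(t)\,\pi(t)+e_k}{N(t)+1}.
\]
Its conditional law given the past is therefore a function of $\pi(t)$ and $t$ only, which proves the statement.

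\textbf{Main obstacle.} The main difficulty is conceptual rather than computational. The transition of $\pi$ genuinely depends on the total size $N(t)$ through the step size $1/(N(t)+1)$. So $\pi$ alone is Markov only because $N(t)$ is a deterministic function of $t$, and then only time-inhomogeneously. If arrivals were random, or if departures were allowed, I would instead prove the result for the pair $(\pi(t),N(t))$ and remark that $\pi$ itself need not be Markov.

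A secondary point is measurability of $F$ near the boundary, where $\pi_k=0$ and $\pi_k^{-\beta}$ blows up for $\beta>0$. I will handle this either by restricting to the active set or by the $\varepsilon$-smoothing of \Cref{sec:Model}; in each case $F$ is piecewise continuous, hence Borel.
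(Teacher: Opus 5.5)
Your proof is correct in substance, but it takes a different route from the paper.

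\textbf{How the two routes differ.} The paper works with the mean-field recursion $\pi_k(t+1)=\pi_k(t)+\bigl(p_k(\pi(t))-\pi_k(t)\bigr)/(N(t)+1)$, in which the realized choice is replaced by its probability. It asserts that this update depends only on $\pi(t)$ and concludes that $\pi$ is a ``(deterministic) Markov process''. The factor $1/(N(t)+1)$ is never discussed. You instead prove the Markov property for the genuinely random process. You first do this for the count chain, integrating the fresh perturbation $\epsilon(t)$ out of the choice law. You then transfer it to $\pi(t)$ through the bijection $n\mapsto n/N(t)$.

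\textbf{What each route buys.} Yours is a statement about the stochastic model itself, which is the object the stochastic-approximation argument of \Cref{thm:analysis} actually concerns. It also makes explicit that $\pi$ is only time-inhomogeneously Markov, because the step size depends on $N(t)$. Finally, it names the hypotheses under which $\pi$ alone is Markov: $N(0)$ non-random, and $\epsilon(t)$ independent of the past. The paper's argument is a one-liner, but for a deterministic recursion it is essentially tautological, and it silently drops the dependence on $N(t)$. Incidentally, $N(t)=\sum_k n_k(t)$ is a function of $n(t)$, so your count chain is in fact time-homogeneous. The inhomogeneity appears only after projecting to $\pi$.

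\textbf{Three small repairs.}
\begin{itemize}
\item Your filtration $\mathcal{F}_t=\sigma(n(0),\epsilon(0),\dots,\epsilon(t-1))$ omits the past selections $\kappa(0),\dots,\kappa(t-1)$, so $n(t)$ is not $\mathcal{F}_t$-measurable. Take $\mathcal{F}_t=\sigma(n(0),\dots,n(t),\epsilon(0),\dots,\epsilon(t-1))$. Then require $\epsilon(t)$ to be independent of $\mathcal{F}_t$, and $\kappa(t)$ to have conditional law $F(\pi(t),\epsilon(t))$ given $\mathcal{F}_t\vee\sigma(\epsilon(t))$.
\item The later sections treat the $\epsilon_k$ as fixed group-specific biases. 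If they are drawn once and frozen, run your argument conditionally on $\epsilon$, with $\nu$ a point mass. Unconditionally, the observed path carries information about $\epsilon$, and $\pi$ alone is in general not Markov.
\item The $\varepsilon$-smoothing of \Cref{sec:Model} regularizes the denominator of $M_{ij}$, not $\pi_k^{-\beta}$, so it does not remove the boundary singularity. The clean fix is that groups only gain members. So if every $n_k(0)\ge 1$, the chain never leaves the interior, where $F$ is continuous. Measurability in the $\pi$-argument is automatic anyway, since $n$ takes countably many values.
\end{itemize}
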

\begin{proof} 
A process $\{\pi_{t}\}$ is Markovian if:
$
\mathbb{P}(\pi_{t+1} | \pi_{t}, \pi_{t-1}, \dots, \pi_{0}) = \mathbb{P}(\pi_{t+1} | \pi_{t}).
$
Here, $\pi_{t} = \pi(t) \in \Delta^{K-1}$, the $(K-1)$-dimensional probability simplex. The update rule:
\[
\pi_k(t+1) = \pi_k(t) + \frac{p_{k}(\pi(t)) - \pi_k(t)}{N(t) + 1}
\]
where $p_{k}(\pi(t))$ is the the probability of choosing $k$ group at time $t$, depends only on $\pi(t)$.
Therefore,
\[
\mathbb{P}(\pi(t+1) | \pi(t), \pi(t-1), \dots) = \mathbb{P}(\pi(t+1) | \pi(t)),
\]
and hence $\{\pi(t)\}$ is a (deterministic) Markov process.
\end{proof}

\begin{wrapfigure}{r}{0.5\textwidth}
\centering
\includegraphics[width=0.5\textwidth]{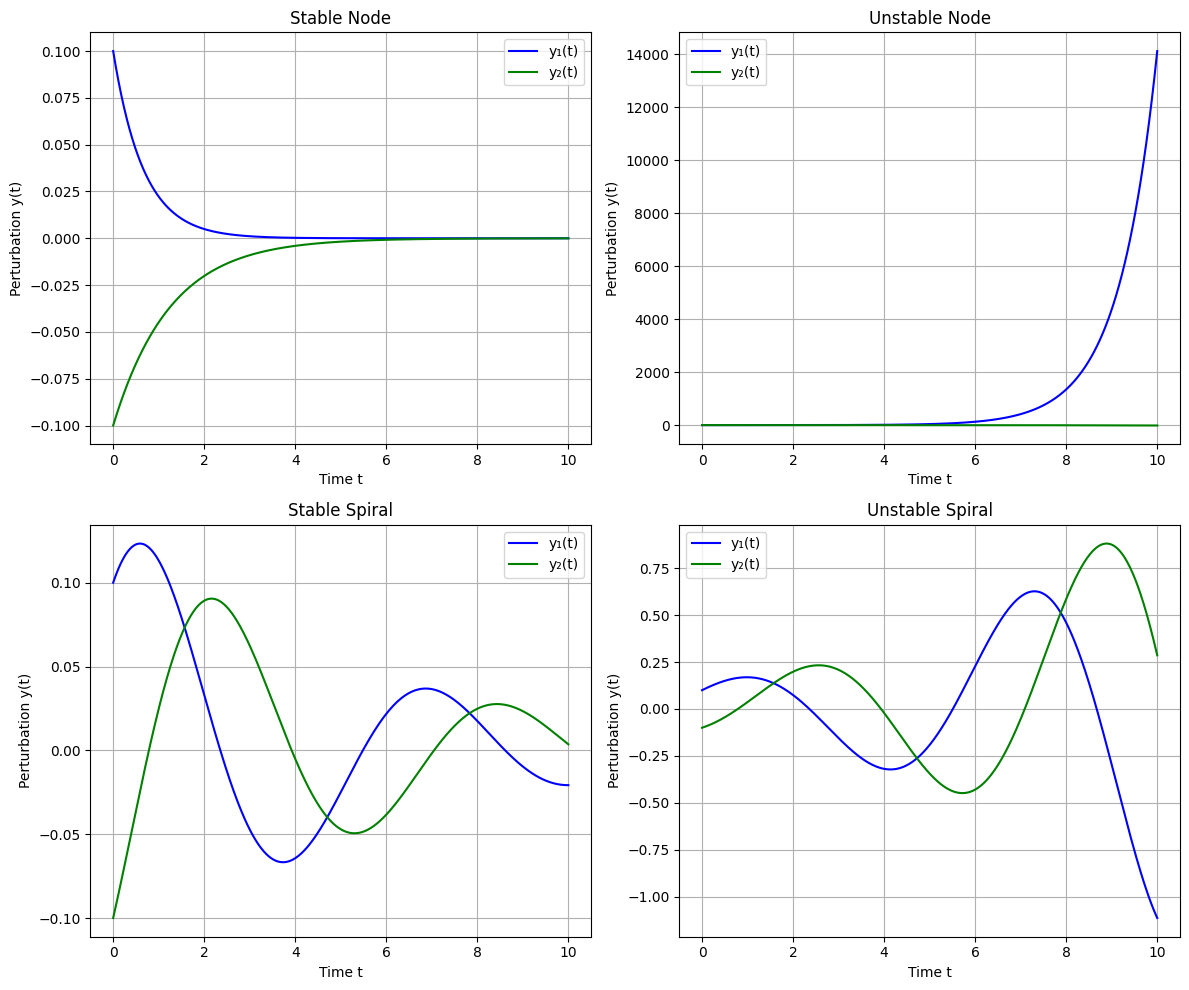}
\caption{Analytical solution around fixed point.}
\label{fig:A_sol}
\end{wrapfigure}

Graphically, the behavior described in \Cref{thm:analysis} of the system depends on the spectrum of $(J - I)$ (see \Cref{eq:analytic_Sol}): Like for stable Node: If all eigenvalues satisfy $\mathrm{Re}(\lambda_i) < 0$, then all perturbations decay exponentially and $\pi$ converges smoothly to $\pi^*$. The group proportions stabilize over time. For unstable Node: If any eigenvalue has $\mathrm{Re}(\lambda_i) > 0$, perturbations grow and the system diverges from equilibrium. Group proportions can shift dramatically, leading to instability. Similarly for stable Spiral: If some eigenvalues are complex with negative real parts, the system exhibits damped oscillations. Group proportions oscillate around equilibrium but eventually settle. Finally for unstable Spiral: If any complex eigenvalue has a positive real part, the system undergoes growing oscillations, spiraling away from equilibrium. These different regimes are illustrated in Figure~\ref{fig:A_sol}, which show representative trajectories of group proportions $\pi_k(t)$ under various spectral conditions.

Under symmetric bias \Cref{thm:equilibrum}—when all groups have the same attraction strength beyond size effects—the system tends toward a uniform distribution over groups. That is, as time progresses and more individuals join, the proportion of members in each group stabilizes at \( \frac{1}{K} \), where \( K \) is the total number of groups. This outcome aligns with intuition: if all groups are equally attractive and the only differentiating factor is size, then a balance emerges through random fluctuations averaging out over time.


When the bias is asymmetric \Cref{thm:equilibrum}—that is, when different groups are intrinsically more or less attractive due to differences in their bias parameters \( \epsilon_k \)—the equilibrium shifts accordingly. The system no longer converges to uniformity but instead to a biased equilibrium where the final group proportions reflect the relative strengths of these biases. In this case, larger \( \epsilon_k \) values correspond to higher limiting proportions \( \pi_k^* \), and the population distribution stabilizes in a way that reinforces the preferential nature of the dynamics.

In summary, the system exhibits stable and predictable behavior: it converges to a uniform distribution under symmetric bias, and to a deterministic, biased equilibrium when biases differ across groups. This analysis provides theoretical assurance that the modeled dynamics are well-behaved and meaningful in long-term scenarios, making them suitable for applications.


\begin{thm}\label{thm:equilibrum}
From  the described dynamics in \Cref{sec:Model}, Equation \Cref{eq:grp_prop} tends to balance out over time.
i.e,
\[
\Pr ( \lim_{t \to \infty} \pi_k(t) \overset{\text{a.s.}}{\to}  \frac{1}{K}) = 1, \quad \text{for all } k = 1, \dots, K
\]
That implies to the system has a fixed point for Equation \Cref{eq:choosing_prob}:
\begin{enumerate}
    \item With symmetric bias (\( \epsilon_k = \epsilon \)), the system converges to a uniform distribution: \( \pi_k^* = \frac{1}{K} \).
    \item With asymmetric bias, the system converges to a biased equilibrium where \( \pi_k^* \) reflects the relative magnitudes of \( \epsilon_k \).
    \item The dynamics are stable and globally convergent in both cases.
\end{enumerate}
\end{thm}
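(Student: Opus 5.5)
The plan is to treat the proportion vector as a stochastic approximation and reduce the statement to the stability of a deterministic mean-field ODE. Write $I_{t+1}\in\{1,\dots,K\}$ for the group chosen by the entrant at time $t+1$ and $e_k$ for the $k$-th unit vector. The update used in the proof of \Cref{thm:Markov} can then be rewritten as
\[
\pi(t+1)=\pi(t)+\frac{1}{N(t)+1}\Bigl(\bar p(\pi(t))-\pi(t)+\xi_{t+1}\Bigr),
\qquad \xi_{t+1}=e_{I_{t+1}}-\bar p(\pi(t)).
\]
Here $\bar p_k(\pi)=\mathbb{E}_\epsilon[p_{\cdot k}(\pi,\epsilon)]$ averages \Cref{eq:choosing_prob} over the i.i.d.\ truncated Gaussian biases (or is just \Cref{eq:choosing_prob} with the $\epsilon_k$ held fixed). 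By \Cref{thm:Markov}, $\xi_{t+1}$ is a bounded martingale difference with respect to the natural filtration. The step sizes $\gamma_t=1/(N(t)+1)$ satisfy $\sum\gamma_t=\infty$ and $\sum\gamma_t^2<\infty$. The ODE method (Benaïm, or Kushner--Yin) therefore gives that the limit set of $\pi(t)$ is almost surely an internally chain transitive set of
\[
\dot\pi=F(\pi):=\bar p(\pi)-\pi \quad\text{on } \Delta^{K-1}.
\]
Two preliminary facts are needed for this to apply. First, $F$ is Lipschitz on the closed simplex; here I would use the $\varepsilon$-smoothed $M_{ij}$ and the lower bound $\epsilon_k\ge 0$ with $\mathbb{E}\epsilon_k>0$. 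Second, the boundary is repelling: $\bar p_k\ge c>0$ uniformly, so $F_k>0$ whenever $\pi_k<c$.

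Next I locate the zeros of $F$, which are the fixed points $\pi^*=\bar p(\pi^*)$ referred to in the statement. Using $M_{kk}=\pi_k$ and the symmetry $M_{ij}=M_{ji}$, at $\pi=\frac1K\mathbf{1}$ every $M_{kj}$ equals $1/K$, so $\theta_k=1$ and $a_{k\cdot}=K^{\beta}$ for all $k$. With symmetric bias, \Cref{eq:choosing_prob} then gives $\bar p_k=1/K$, which proves item (1) at the level of fixed points. For item (2), existence of a fixed point follows from Brouwer, since $\bar p$ is a continuous self-map of the simplex. For the ordering claim I would apply the implicit function theorem to $G(\pi,\epsilon)=\bar p(\pi;\epsilon)-\pi$ at a nondegenerate zero. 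Since $\partial \bar p_k/\partial\epsilon_k>0$ and $\partial\bar p_j/\partial\epsilon_k<0$ for $j\ne k$, this should give $\partial\pi_k^*/\partial\epsilon_k>0$, i.e.\ larger $\epsilon_k$ yields larger $\pi_k^*$. This step requires $DF(\pi^*)=J-I$ to be invertible, and that is exactly the spectral condition discussed after \Cref{thm:Markov}.

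The main obstacle is item (3): global asymptotic stability of this unique interior zero. Only global stability lets the ODE method upgrade ``limit set is chain transitive'' to ``$\pi(t)\to\pi^*$ a.s.''. Local stability alone would only give convergence with positive probability. I would first try the Lyapunov function
\[
V(\pi)=\max_k \frac{\pi_k}{\pi_k^*},
\]
and show that at any $\pi\neq\pi^*$ the maximizing index $k$ satisfies $\bar p_k(\pi)<\pi_k$. This is where the inverse preference must do the work. The factor $\pi_k^{-\beta}$ is smallest for the over-represented group, while $\theta_k=\sum_j M_{kj}$ grows at most linearly in $\pi_k$; the bound $M_{kj}\le \max\{\pi_k,\pi_j\}$ follows from the explicit form $x^2/y+y-x$. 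I expect this to give the needed negative feedback only when $\beta$ exceeds some threshold, likely around $\beta\ge 1$ or depending on the bias spread, so I anticipate having to add such a hypothesis.

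If the max-ratio function fails because $\theta_k$ is non-monotone along the flat direction of $M$ (\Cref{prop:concavity}), the fallback is to linearize. I would compute $J=D\bar p(\pi^*)$ explicitly at the uniform point, show that all eigenvalues of $J-I$ have negative real part, and combine this with the boundary repulsion and a Poincaré--Bendixson/Dulac-type argument to exclude other chain transitive sets. That argument is available for $K=2,3$; for general $K$ I would instead look for a monotone (cooperative) structure. Once global stability is established, the a.s.\ convergence in the displayed limit follows directly from the ODE method in the first step.
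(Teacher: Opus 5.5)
Your stochastic-approximation setup is sound, and more careful than the paper's, which simply replaces the random update by its expectation and reads off the ODE. The genuine gap is item (3), which is what the displayed a.s.\ limit rests on.

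\textbf{The gap in your argument.} You assume the interior zero is unique (``this unique interior zero'') without proving it. You leave $\max_k\pi_k/\pi_k^*$ unverified and expect to add a threshold on $\beta$, which would prove a different theorem. Several other steps also fail as stated.
\begin{itemize}
\item The linearization fallback has no Jacobian to work with. With $\theta_k=\sum_j M_{kj}$, every pair ties in $\max\{\pi_i,\pi_j\}$ at $\frac1K\mathbf 1$, and $\partial_xM$ jumps from $1$ to $0$ across $x=y$. The directional derivative of $\theta_k$ there is $\sum_j\min(v_k,v_j)$, which is not linear in $v$. So for $K\ge3$ there is no $J$ at the uniform point, either to linearize or to start an implicit-function continuation into the asymmetric regime.
\item Where $J$ does exist, invertibility of $J-I$ plus the signs of $\partial\bar p/\partial\epsilon$ does not fix the sign of $[(I-J)^{-1}D_\epsilon\bar p]_{kk}$. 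In any case, an own-derivative sign is not the cross-sectional claim $\epsilon_k>\epsilon_j\Rightarrow\pi_k^*>\pi_j^*$.
\item For $\beta>0$, neither ``$\bar p_k\ge c$ uniformly'' nor ``$F$ Lipschitz on the closed simplex'' holds. $\bar p_k\to0$ whenever another group empties, since $\pi_j^{-\beta}\to\infty$. Only the smallest coordinate is pushed off the boundary.
\end{itemize}

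\textbf{What the paper does, and how the step can be closed.} The paper avoids these problems by working with the reduced rule $p_k=(\theta\pi_k^{-\beta}+\epsilon_k)/\sum_j(\theta\pi_j^{-\beta}+\epsilon_j)$, with a constant $\theta$ in place of $\theta_k(t)$. It checks that $1/K$ satisfies $\pi_k^*S=\theta\pi_k^{*-\beta}+\epsilon_k$. For asymmetric bias it expands to first order, obtaining $\delta_k=\eta_k/(\theta K^{\beta+1}(1+\beta)+K\epsilon)$. Global stability is only asserted (cf.\ \Cref{rm:global_stable}).

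For that reduced model with fixed $\epsilon_k>0$, the missing step has a clean proof for every $\beta\ge0$:
\begin{itemize}
\item Set $r_k=(\theta\pi_k^{-\beta}+\epsilon_k)/\pi_k$. It depends on $\pi_k$ alone and is strictly decreasing in it.
\item Then $\dot\pi_k=\pi_k(r_k-\bar r)/\bar r$ with $\bar r=\sum_j\pi_jr_j$. The minimizer of $r$ shrinks and the maximizer grows.
\item Hence $\max_kr_k-\min_kr_k$ is a strict Lyapunov function, and it keeps orbits away from the boundary.
\item Equilibria are exactly $r_k\equiv c$. The constant $c$ is unique because $\sum_kr_k^{-1}(c)=1$ is strictly monotone in $c$.
\item The relation $\epsilon_k=c\pi_k^*-\theta\pi_k^{*-\beta}$ has the same strictly increasing right-hand side for all $k$. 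It therefore gives the ordering exactly, without any implicit-function argument.
\end{itemize}
By contrast, even in this reduced model your $\max_k\pi_k/\pi_k^*$ can increase under asymmetric bias, for instance for large $\beta$. Its maximizer need not minimize $r$.

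In the full $\theta_k(\pi)$ model, $r_k$ depends on all coordinates and this monotone structure disappears. Neither your proposal nor the paper proves (3) there. Note also that the displayed limit $1/K$ can only refer to the symmetric case, since it contradicts item (2).
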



\begin{rem}\label{rm:global_stable}
From the symmetric dynamics: $\frac{d\pi_k}{dt} = p_{k}(\Pi) - \pi_k$. If $\pi_k > \frac{1}{K}$, then $\pi_k^{-\beta}$ decreases, so $p_{k}(\Pi) < \pi_k \Rightarrow \frac{d\pi_k}{dt} < 0$, which reduces $\pi_k$.
If $\pi_k < \frac{1}{K}$, then $\pi_k^{-\beta}$ increases, so $p_{k}(\Pi) > \pi_k \Rightarrow \frac{d\pi_k}{dt} > 0$, which increases $\pi_k$.
Therefore, the equilibrium is globally asymptotically stable.
\end{rem}
\section{\centering Analysis}\label{sec:analy}


Theorem~\ref{thm:analysis} formalizes the evolution of group proportions $\pi(t)$ as a 
\emph{Robbins--Monro stochastic approximation} \citep{Benaim1996,KushnerYin2003}. 
Each group’s proportion is updated toward the expected probability that a new entrant joins that group, 
with a step size $\gamma_t = 1/(N_k(t)+1)$ that decreases over time. Under the standard stochastic approximation assumptions—namely, 
\emph{step size conditions} (Assumption~4.2), \emph{boundedness of iterates} (Assumption~4.3), 
\emph{Lipschitz continuity of the drift function} (Assumption~4.4), and \emph{bounded martingale noise} (Assumption~4.5)—the sequence $\pi(t)$ converges \emph{almost surely} to a stable equilibrium $\pi^* \in \Delta_K$.

Intuitively, while early stochastic fluctuations influence the trajectory, the system eventually stabilizes in a composition where group proportions are self-consistent with the attraction probabilities. This framework rigorously explains the long-term stability and path-dependent dynamics of group formation under mutual attraction and size-biased growth, and provides a clear connection between the stochastic discrete-time process and its deterministic ODE approximation.

\begin{thm}
    \label{thm:analysis}

Let $\pi(t) \in \Delta_K$ evolve according to the update rule, 
$
\pi_k(t+1) = \pi_k(t) + \gamma_t \left(p_{ik}(t) - \pi_k(t)\right),
$
where $\gamma_t = \frac{1}{N_k(t)+1}$ is the step size and 
$p_{ik}(t) = \frac{\theta \pi_k(t)^{-\beta} + \epsilon_k}{\sum_{j=1}^K \theta \pi_j(t)^{-\beta} + \epsilon_j}, \quad \theta > 0, \beta > 0, \epsilon_k > 0.$
Then:
\begin{itemize}
    \item[(i)] This defines a Robbins--Monro-type stochastic approximation algorithm.
    \item[(ii)] The sequence $\pi(t)$ converges almost surely to a fixed point $\pi^* \in \Delta_K$ satisfying
    $
    \pi_k^* = p_{ik}(\pi^*) \quad \text{for all } k = 1, \dots, K.
    $
\end{itemize}
\end{thm}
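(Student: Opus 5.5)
We prove (i) by rewriting the update in Robbins--Monro form. Let $\mathcal F_t$ be the $\sigma$-field generated by the group sizes up to time $t$. Let $\xi_{t+1}\in\{e_1,\dots,e_K\}$ be the indicator of the group chosen by the $(t+1)$-st entrant, so that $\mathbb E[\xi_{t+1}\mid\mathcal F_t]=p(\pi(t))$, where $p(\pi)=(p_{i1}(\pi),\dots,p_{iK}(\pi))$. Counting gives $n(t+1)=n(t)+\xi_{t+1}$ and $N(t+1)=N(t)+1$. Hence
\[
\pi(t+1)=\pi(t)+\frac{1}{N(t)+1}\bigl(\xi_{t+1}-\pi(t)\bigr)=\pi(t)+\gamma_t\bigl(F(\pi(t))+M_{t+1}\bigr),
\]
with $F(\pi)=p(\pi)-\pi$ and $M_{t+1}=\xi_{t+1}-p(\pi(t))$. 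This is exactly a Robbins--Monro scheme. We read the step size as the common $\gamma_t=1/(N(t)+1)=1/(N_0+t+1)$, which is what makes the update consistent with \Cref{eq:grp_prop}.

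We then check the standard hypotheses of \citep{Benaim1996,KushnerYin2003}.
\begin{itemize}
\item \emph{Step sizes:} $\sum\gamma_t=\infty$ and $\sum\gamma_t^2<\infty$.
\item \emph{Bounded iterates:} $\pi(t)\in\Delta_K$ always.
\item \emph{Bounded martingale noise:} $M_{t+1}$ is a martingale difference with $\|M_{t+1}\|\le 2$.
\item \emph{Lipschitz drift:} $F$ must be Lipschitz on the region actually visited.
\end{itemize}
The last point needs care, because $\pi_k^{-\beta}$ blows up at the boundary. Two observations handle this. First, each $p_{ik}$ extends continuously to the closed simplex: as $\pi_k\to0$, $p_{ik}\to1$ whenever the other coordinates stay positive. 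Second, every $p_{ik}$ is bounded below by $c=\min_k\epsilon_k/(\text{upper bound of the denominator})$ on any compact set away from the corners. A cleaner route is to multiply numerator and denominator by $\prod_j\pi_j^{\beta}$, which gives a continuous form. I would first show that the process stays in a compact set $\Delta_K^\delta=\{\pi:\pi_k\ge\delta\ \forall k\}$ eventually. The argument is that when $\pi_k$ is small, $p_{ik}$ is close to $1$, so the drift on coordinate $k$ is strongly positive. The martingale term $\sum\gamma_tM_{t+1}$ converges a.s. by the $L^2$ martingale convergence theorem, so fluctuations cannot overcome this drift. On $\Delta_K^\delta$ the map $F$ is smooth, hence Lipschitz. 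By Bena\"im's theorem, the limit set of $\pi(t)$ is then a.s. a compact, connected, internally chain-transitive set of the ODE $\dot\pi=p(\pi)-\pi$.

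For (ii) we identify that limit set, using a Lyapunov argument in the spirit of \Cref{rm:global_stable}. Writing $w_k(\pi)=\theta\pi_k^{-\beta}+\epsilon_k$, the fixed-point equation $\pi_k=w_k/\sum_jw_j$ says $\pi_k\,\mathbf{}(\theta\pi_k^{-\beta}+\epsilon_k)=\lambda$ is constant in $k$. Set $g_k(x)=\theta x^{1-\beta}+\epsilon_k x$. For $0<\beta\le1$ each $g_k$ is strictly increasing, so for each $\lambda$ there is a unique $x_k(\lambda)$, and $\sum_k x_k(\lambda)=1$ fixes $\lambda$ uniquely. The fixed point $\pi^*$ therefore exists and is unique. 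Global convergence of the ODE comes from the ranking structure. Let $h_k(\pi)=g_k(\pi_k)$. Then $\dot\pi_k=(h_k-\bar h\,\mathbf{})/\!\sum_jw_j$ in effect, with $\bar h=\sum_jw_j\pi_j$ up to normalization. The coordinate with maximal $h_k$ decreases and the one with minimal $h_k$ increases. So $V(\pi)=\max_kh_k-\min_kh_k$ is a strict Lyapunov function off $\pi^*$. Its only chain-transitive set is $\{\pi^*\}$, which gives $\pi(t)\to\pi^*$ a.s.

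The main obstacle is twofold. The first part is the boundary/Lipschitz issue, namely proving that the iterates stay uniformly away from $\partial\Delta_K$. The second part is the regime $\beta>1$, where $g_k$ is non-monotone, so uniqueness of the fixed point and the Lyapunov argument may fail. In that case I would only claim convergence to the set of fixed points, restricted to the linearly stable ones via the spectral criterion on $J-I$ from the discussion around \Cref{fig:A_sol}. Nonconvergence to unstable fixed points would follow from Pemantle's nondegenerate-noise criterion \citep{pemantle2007survey}.
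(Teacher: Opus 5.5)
\textbf{Verdict: genuine gap in part (ii).} The fixed-point equation is mis-derived, and this invalidates your Lyapunov argument outside the symmetric case.

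Your part (i), the hypothesis checklist, and the reading $\gamma_t=1/(N(t)+1)$ are fine and match the paper. The paper then stops: it assumes $\pi_k(t)>\delta$ and makes convergence to $\pi^*$ conditional on the ODE having a unique globally attractive fixed point. So the real content of your proposal is the Lyapunov step, and it fails as written.

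From $\pi_k=w_k/\sum_jw_j$ you get $\sum_jw_j=w_k/\pi_k$. What is common to all $k$ at a fixed point is therefore the ratio $w_k(\pi_k)/\pi_k$, not the product $\pi_kw_k(\pi_k)$. Your $g_k(x)=\theta x^{1-\beta}+\epsilon_kx$ consequently locates the wrong point. Take $K=2$, $\beta<1$, $\epsilon_1>\epsilon_2$: the equation $g_1(\pi_1)=g_2(\pi_2)$ forces $\pi_1<\pi_2$. The true fixed point has $\pi_1>\pi_2$, consistent with the paper's first-order formula $\delta_k\propto\eta_k$ with positive coefficient.

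For the same reason, $\dot\pi_k=(h_k-\bar h)/\sum_jw_j$ is not an identity. Nor is $V=\max_kh_k-\min_kh_k$ a Lyapunov function for $\dot\pi=p(\pi)-\pi$: it vanishes at your point, which is not a rest point of the true flow, so $V$ increases as the trajectory leaves it. Only the symmetric case survives, because there both equations give $\pi_k=1/K$.

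The idea is repairable, and the repair gives more than you claim. Set $G_k(x)=\theta x^{-(1+\beta)}+\epsilon_kx^{-1}$ and $W=\sum_jw_j=\sum_j\pi_jG_j(\pi_j)$. Then
\[
\dot\pi_k=\frac{\pi_k}{W}\bigl(G_k(\pi_k)-W\bigr),
\]
so the coordinate with the largest $G_k$ grows and the one with the smallest shrinks. For every $\beta>0$, each $G_k$ is a strictly decreasing bijection of $(0,\infty)$. Hence:
\begin{enumerate}
\item The fixed point, defined by $G_1(\pi_1)=\dots=G_K(\pi_K)$ and $\sum_k\pi_k=1$, is unique.
\item $V=\max_kG_k(\pi_k)-\min_kG_k(\pi_k)$ strictly decreases off that fixed point.
\item Your $\beta>1$ caveat, and the appeal to Pemantle's nonconvergence criterion, are unnecessary.
\item This $V$ tends to $+\infty$ at $\partial\Delta_K$, so its sublevel sets are compact and forward invariant in the interior.
\end{enumerate}

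There is also a smaller problem in the stochastic boundary step. The claim ``$\pi_k$ small $\Rightarrow p_{ik}\approx1$'' fails when another coordinate is much smaller. For $\pi=(\delta,\delta^m,1-\delta-\delta^m)$ with $(m-1)\beta>1$ and $\delta$ small, one gets $p_{i1}<\pi_1$, so coordinate $1$ has negative drift. Argue with the minimal coordinate instead, for which $p_{ik}\ge\theta\pi_k^{-\beta}/(K\theta\pi_k^{-\beta}+\sum_j\epsilon_j)\to1/K$.
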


\begin{proof}
We verify the standard Robbins--Monro assumptions:

\begin{asm}[Step Size Conditions]
Let $\gamma_t = \frac{1}{N_k(t)+1}$. Assuming $N_k(t) \to \infty$, then:
$
\gamma_t \to 0, \quad \sum_{t=1}^\infty \gamma_t = \infty, \quad \sum_{t=1}^\infty \gamma_t^2 < \infty.
$
This matches standard requirements for stochastic approximation.
\end{asm}

\begin{asm}[Boundedness of Iterates]
At each $t$, $\pi_k(t) \in [0,1]$ and $\sum_{k=1}^K \pi_k(t) = 1$. The update rule:
\[
\pi_k(t+1) = \frac{N_k(t)\pi_k(t) + p_{ik}(t)}{N_k(t)+1}
\]
is a convex combination of values in $[0,1]$, hence $\pi(t) \in \Delta_K$ for all $t$.
\end{asm}

\begin{asm}[Lipschitz Drift]
Define the drift function:
$
f_k(\pi) = p_{ik}(\pi) - \pi_k,
$
where
$
p_{ik}(\pi) = \frac{\theta \pi_k^{-\beta} + \epsilon_k}{\sum_{j=1}^K \theta \pi_j^{-\beta} + \epsilon_j}.
$
Assuming $\pi_k(t) > \delta > 0$ (guaranteed via $\epsilon_k > 0$), all components of $f_k(\pi)$ are smooth and Lipschitz on the interior of the simplex. Hence, $f_k$ is Lipschitz continuous.
\end{asm}

\begin{asm}[Martingale Noise (Optional)]
If $p_{ik}(t)$ is estimated via random sampling (e.g., via indicator $\tilde{p}_{ik}(t) \in \{0,1\}$ with $\mathbb{E}[\tilde{p}_{ik}(t) | \mathcal{F}_t] = p_{ik}(t)$), then the update becomes:
$
\pi_k(t+1) = \pi_k(t) + \gamma_t (\tilde{p}_{ik}(t) - \pi_k(t)).
$
The noise term $\tilde{p}_{ik}(t) - p_{ik}(t)$ is a bounded martingale difference with variance at most 1. This satisfies the standard noise conditions.
\end{asm}

By the Robbins--Monro theorem (\citep{Benaim1996, KushnerYin2003}), under these assumptions, $\pi(t)$ converges almost surely to a stable equilibrium of the limiting ODE:
$
\frac{d\pi_k}{dt} = p_{ik}(\pi) - \pi_k.
$
The fixed points of this ODE are characterized by $\pi_k^* = p_{ik}(\pi^*)$ for all $k$. If the ODE has a unique globally attractive fixed point, convergence to $\pi^*$ is guaranteed.
\end{proof}

\section{\centering Simulations}\label{sec:simu}

\begin{wrapfigure}{r}{0.5\textwidth}
\centering
\includegraphics[width=0.5\textwidth]{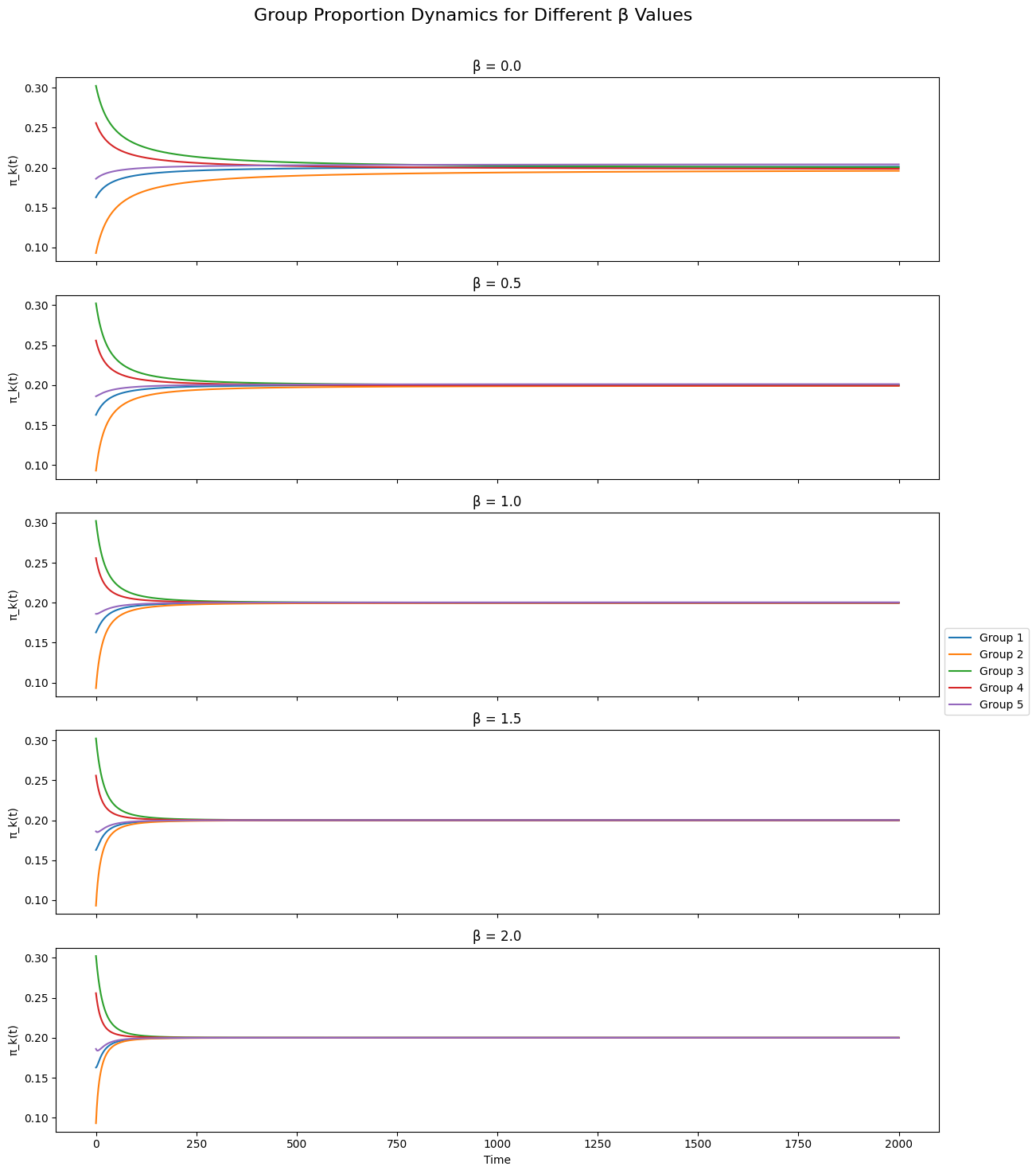}
\caption{Stability around fixed point due to symmetric Bias}
\label{fig:symmetric}
\end{wrapfigure}

To examine how the inverse-preference parameter $\beta$ affects long-term group composition, we implemented a simulation (Figure ~\ref{fig:symmetric}) that evolves group sizes over time under repeated stochastic entry. We consider a fixed number of groups ($K = 5$) and initialize group sizes randomly from a uniform integer range. At each time step, a new entrant joins the population and selects a group according to probabilistic weights derived from the attraction function, which depends on group size and the parameter $\beta$. A small group-specific noise term is added to each group’s attraction score to ensure that all groups remain viable and to simulate mild randomness or exogenous fluctuation in attractiveness.

The simulation is run for $T = 1000$ time steps for a range of $\beta$ values from $0.0$ to $2.0$. For each value of $\beta$, we record the full time series of group proportions \Cref{eq:grp_prop}, as well as the final group sizes, proportions, and the probabilities used by entrants at the last time step.

To compare behavior across $\beta$, we generate plots of Equation \Cref{eq:grp_prop} for all groups over time. These reveal how inverse-preference strength shapes convergence and dominance patterns: lower values of $\beta$ tend to reinforce initially larger groups, while higher values introduce stronger penalization for large group sizes, often promoting diversity or maintaining balance.

In addition to visualizing the dynamics, we construct summary tables capturing the initial and final group sizes, final group proportions, and final entrant probabilities for each group under each $\beta$. These statistics allow for direct numerical comparison across different model regimes and demonstrate the long-term implications of varying the strength of size-based preference in the attraction function.


\begin{wrapfigure}{l}{0.5\textwidth}
\centering
\includegraphics[width=0.5\textwidth]{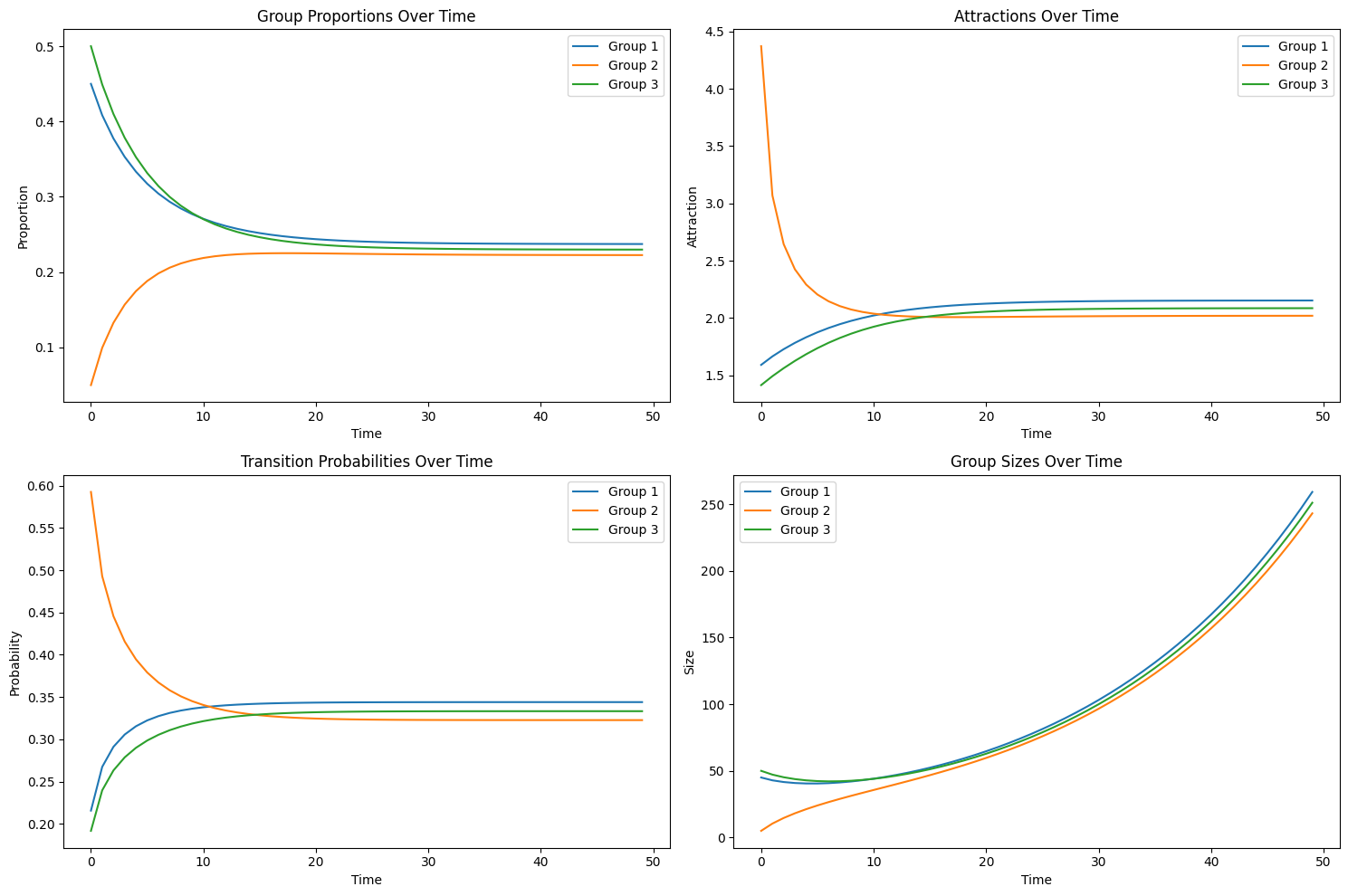}
\caption{Group Proportion vs Attraction vs Transition Probabilities vs Group Size over time}
\label{fig:GP_A_TP_S}
\end{wrapfigure}

We implemented a discrete-time simulation (Figure ~\ref{fig:GP_A_TP_S}.) to examine the evolution of group proportions under the biased attraction model defined in the previous section. Starting with a fixed initial population distributed across \(K\) groups, the simulation updates group sizes iteratively by incorporating both external growth and internal redistribution. At each time step, the proportion of individuals in each group is used to compute its attraction, which includes an inverse preference term and group-specific bias (see Equation \Cref{eq:Attraction_Potential}). These attraction values are normalized to yield transition probabilities (see Equation \Cref{eq:choosing_prob}) that determine how new individuals and existing members select groups. A fraction \(\eta\) of the current population is introduced as new entrants, allocated across groups according to these transition probabilities, while the remaining population is allowed to switch groups in proportion to the difference between desired and actual group sizes. To ensure numerical stability and avoid abrupt changes, a damping factor is applied to smooth the update of group sizes (see Equation \Cref{eq:grp_prop}). 

\begin{wrapfigure}{r}{0.5\textwidth}
\centering
\includegraphics[width=0.5\textwidth]{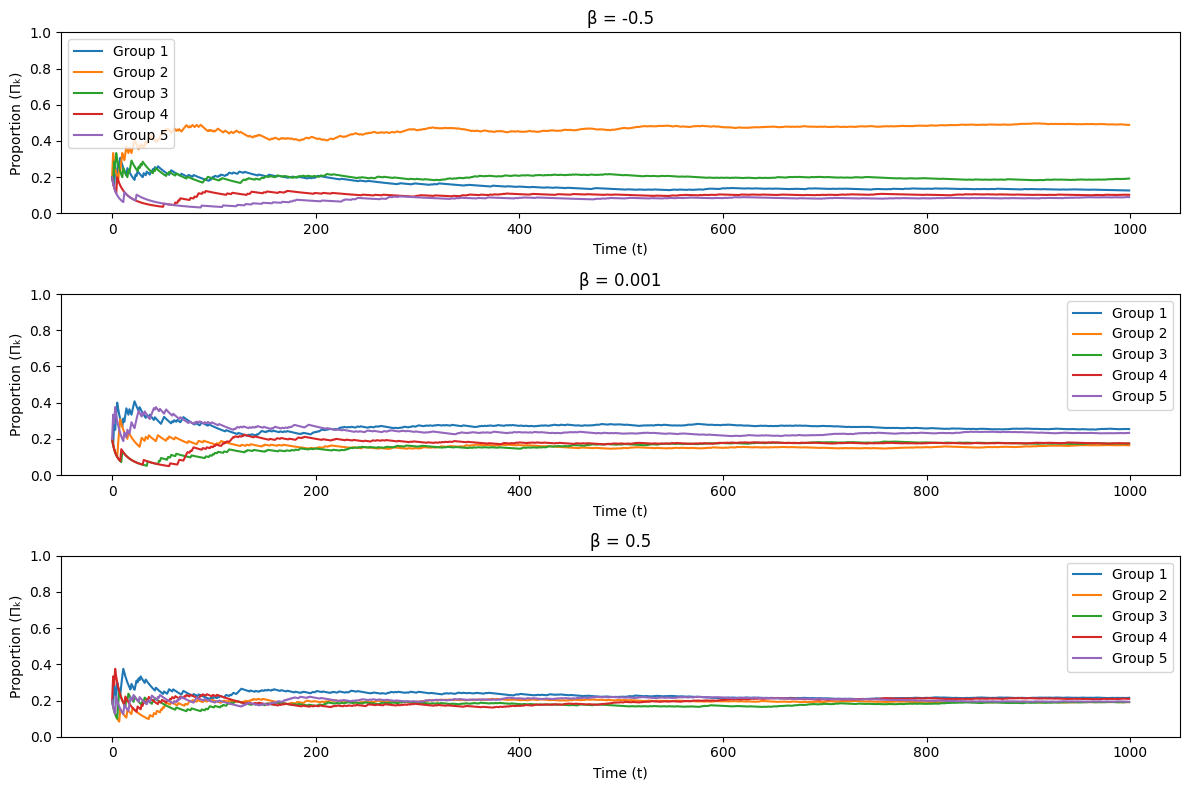}
\caption{Stability around fixed point due to asymmetric bias}
\label{fig:Equilibrium}
\end{wrapfigure}

Throughout the simulation, we record the evolution of group sizes, proportions, attraction values, and transition probabilities, which together provide insight into how the system evolves and stabilizes under various parameter configurations. This simulation framework supports asymmetric biases and varying preference strength, allowing us to explore convergence behavior, persistent diversity, and the influence of early conditions on long-run group composition.
The curves in the simulation exhibit noticeable fluctuations (Figure ~\ref{fig:Equilibrium}) and lack smoothness primarily due to the inherent stochasticity in the group selection process. At each time step, the assignment of a new member to a group is probabilistic, which introduces random variation in group sizes and proportions over time. This randomness is particularly evident when group proportions are small or when attraction probabilities are similar, causing the proportions to fluctuate rather than evolve smoothly. Moreover, the different values of the bias parameter \(\beta\) — specifically \(-0.5\), \(0.1\), and \(0.5\) — significantly influence the system's behavior. Negative \(\beta\) values tend to favor larger groups, positive values favor smaller groups, and near-zero values imply minimal size bias. These nonlinear effects, combined with stochasticity, lead to bifurcation phenomena where the system may converge to distinct equilibrium configurations depending on initial conditions and random fluctuations. Consequently, the interaction of different \(\beta\) parameters with the stochastic dynamics produces irregular, non-smooth trajectories and multiple stable equilibria observed in the simulation.

\begin{wrapfigure}{r}{0.5\textwidth}
\centering
\includegraphics[width=0.5\textwidth]{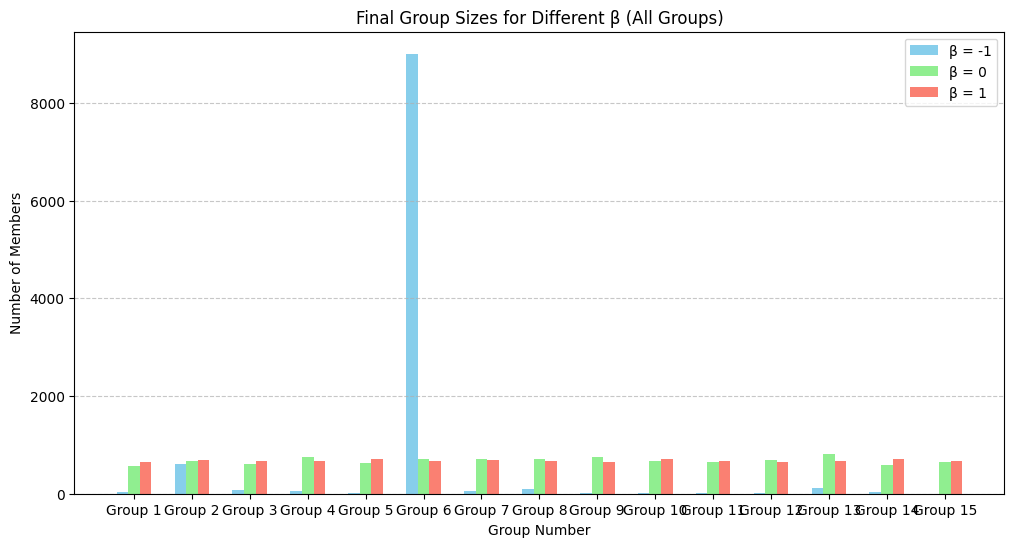}
\caption{Group size for different $\beta$'s}
\label{fig:grp_form}
\end{wrapfigure}

In this extended simulation study, the system exhibits stability around the fixed point. Figure~\ref{fig:grp_form} shows the group formation for different values of $\beta$. We analyze the evolution of group proportions within a system of \(K=15\) groups over \(T=10{,}000\) discrete time steps. Unlike the previous uniform initialization, each simulation run begins with randomly assigned initial group sizes, drawn uniformly between 1 and 20 members, to capture more heterogeneous starting conditions. Three different values of the bias parameter \(\beta \in \{-1, 0, 1\}\) are tested to explore the influence of size bias on group dynamics. At each time step, the probability of a new member joining a given group depends on a nonlinear mutual attraction matrix \(M\), computed similarly to the prior model, and scaled by the group's current proportion raised to the power \(-\beta\). This framework allows us to model varying preferences: \(\beta = -1\) favors larger groups, \(\beta = 1\) favors smaller groups, and \(\beta = 0\) corresponds to neutral size bias. The group selection process is stochastic, with each new member assigned probabilistically based on normalized attraction scores. The simulation records the temporal evolution of group proportions, which are plotted to observe convergence behavior and potential equilibrium states. Additionally, a bar plot of the final group sizes across all groups for each \(\beta\) illustrates the long-term impact of size bias under heterogeneous initial conditions. This approach provides insights into how initial heterogeneity and size preference jointly shape group composition over extended time horizons.



\begin{wrapfigure}{l}{0.5\textwidth}
\centering
\includegraphics[width=0.5\textwidth]{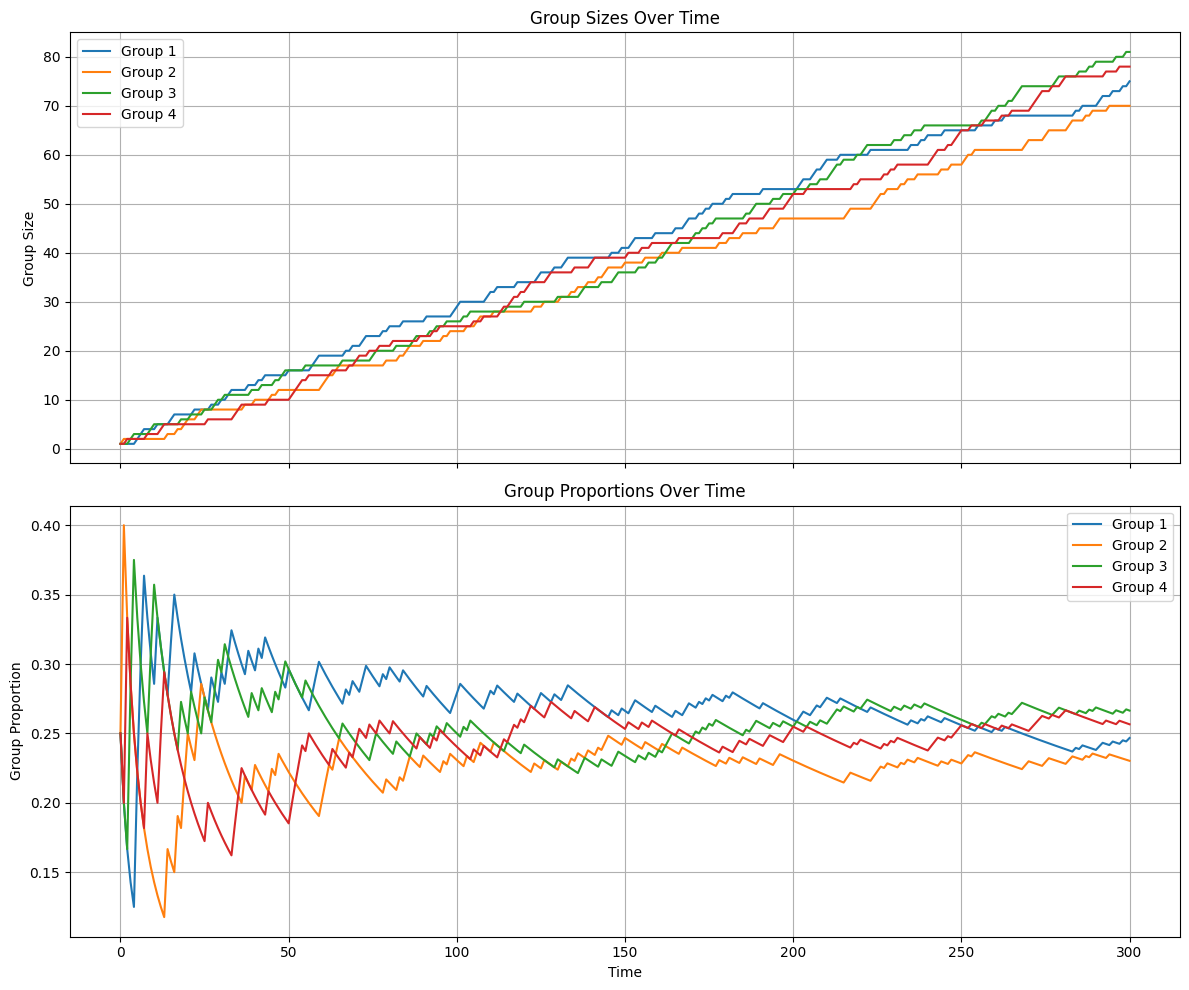}
\caption{Group formation and their proportion over time}
\label{fig:G_form_and_Prop}
\end{wrapfigure}

We performed a simulation study of group dynamics (Figure ~\ref{fig:G_form_and_Prop}. for \(K=10\) groups over \(T=3000\) time steps, analyzing how different values of the bias parameter \(\beta\) influence group growth and final size distributions. Each simulation starts with a heterogeneous initial distribution of group members, reflecting realistic variability at \(t=0\). The group sizes evolve as new members probabilistically join groups based on a mutual attraction model weighted by group proportions raised to the power \(-\beta\). This parameter \(\beta\) modulates size bias: negative \(\beta\) values favor larger groups, positive \(\beta\) values favor smaller groups, and values near zero indicate minimal bias.

For \(\beta = -0.5\), the simulation reveals a strong preferential attachment to already large groups, resulting in a highly skewed final distribution where a few groups dominate the population (e.g., one group grows to over 5500 members while others remain below 1100). Conversely, for \(\beta = 0.1\), the final group sizes are more balanced, with all groups reaching roughly similar populations around 1400–1600 members. For \(\beta = 0.5\), the bias toward smaller groups further equalizes the group sizes, yielding nearly uniform final group sizes (Table ~\ref{tab:group_comparison}.) across all groups.

These results demonstrate that the bias parameter \(\beta\) critically shapes group size dynamics, ranging from strong concentration under negative bias to near-equal group sizes under positive bias, mediated by the nonlinear mutual attraction mechanism and initial group heterogeneity.

\begin{table}[htb!]
\centering
\begin{tabular}{|c|cc|cc|cc|}
\toprule
\multirow{2}{*}{Group} & \multicolumn{2}{c|}{$\beta = -0.5$} & \multicolumn{2}{c|}{$\beta = 0.1$} & \multicolumn{2}{c|}{$\beta = 0.5$} \\
\cmidrule(lr){2-3}\cmidrule(lr){4-5}\cmidrule(lr){6-7}
 & Initial & Final & Initial & Final & Initial & Final \\
\midrule
1  & 7    & \textcolor{blue}{2829} & 4    & 1535 & 5    & 1504 \\
2  & 4    & 746  & 8    & 1348 & 1    & 1537 \\
3  & 8    & 976  & 8    & 1461 & 10   & 1532 \\
4  & 5    & 797  & 3    & 1527 & 6    & 1514 \\
5  & 7    & 675  & 6    & 1498 & 9    & 1492 \\
6  & 10   & \textcolor{blue}{5561} & 5    & 1536 & 1    & 1496 \\
7  & 3    & 626  & 2    & 1618 & 10   & 1517 \\
8  & 7    & 946  & 8    & 1580 & 3    & 1451 \\
9  & 8    & 1048 & 6    & 1525 & 7    & 1486 \\
10 & 5    & 860  & 2    & 1424 & 4    & 1527 \\
\bottomrule

\end{tabular}
\caption{Group Size Comparison Across Group's for Different $\beta$ Values}
\label{tab:group_comparison}
\end{table}
\section{\centering Summary}\label{sec:summ}

This work introduces a novel probabilistic model of group dynamics integrating nonlinear mutual attraction and size-based bias controlled by a tunable parameter \(\beta\). Theoretically, the model captures rich behaviors including preferential attachment, equilibrium convergence, and bifurcation phenomena influenced by the interplay of mutual group attractiveness and group size effects. Through extensive simulations with varying \(\beta\) values and heterogeneous initial conditions, we demonstrate how the parameter modulates group size distributions—from strong concentration favoring dominant groups at negative \(\beta\) to more balanced or uniform distributions at positive \(\beta\). These insights provide a versatile framework for understanding the evolution of social, biological, or economic groups where size and mutual influence affect growth dynamics. Potential applications span network formation, market share evolution, opinion dynamics, and community detection. Future work could explore analytical characterization of equilibrium states, extensions to time-varying or adaptive \(\beta\), incorporation of exogenous shocks, and empirical validation on real-world datasets.

The nobility of this model lies in its ability to realistically capture group dynamics through a mathematically rigorous and flexible framework. By incorporating mutual attraction and a tunable parameter $\beta$that adjusts preferences for smaller or larger groups, the model reflects behaviors commonly observed in social, organizational, and economic settings. Its structure allows for dynamic analysis of group formation and evolution over time, while ensuring all groups retain nonzero selection probabilities. This versatility and theoretical depth make the model broadly applicable across disciplines, offering valuable insights into how group proportions stabilize or shift under various conditions.

\section{\centering Limitations}\label{sec:lim}

One limitation of the current simulation framework is that it models group evolution solely through the addition of new entrants, without allowing existing members to switch between groups. That is, the only mechanism driving change in group composition is the sequential entry of new individuals, whose group choice is governed by the attraction-based probabilities. As a result, the simulation does not account for mutual exchange or migration between groups among the existing population. In realistic social or organizational systems, however, individuals often reevaluate their affiliations and may switch groups based on perceived attractiveness, peer influence, or shifting group dynamics. The absence of such endogenous redistribution may cause the model to overemphasize the role of initial conditions or accumulate imbalances that would otherwise be corrected through member switching. Incorporating mutual exchange among existing members would enrich the dynamics and potentially lead to more realistic equilibrium behaviors, particularly in settings where mobility between groups is common.

\appendix
\section{}
\subsection{Degeneracy}

\begin{thm}\label{prop:concavity}
Let
\[
M(x,y)=\frac{x^{2}}{y}-x+y , \qquad (x,y)\in(0,1)^{2}.
\]
Then $M$ is convex but not strictly convex on $(0,1)^{2}$. Its Hessian is positive semidefinite of rank one at every interior point. In particular, $M$ has no interior critical points in $(0,1)^{2}$.
\end{thm}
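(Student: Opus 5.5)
The plan is to compute the Hessian explicitly and read off every claim from it. Write $M(x,y)=x^2/y - x + y$ on $(0,1)^2$. The first derivatives are
\[
M_x = \frac{2x}{y}-1,\qquad M_y = -\frac{x^2}{y^2}+1 ,
\]
and the second derivatives are
\[
M_{xx}=\frac{2}{y},\qquad M_{xy}=-\frac{2x}{y^2},\qquad M_{yy}=\frac{2x^2}{y^3}.
\]
The key observation is that the only nonlinear part, $x^2/y$, is the classical perspective of $x\mapsto x^2$. Its Hessian factors as a rank-one outer product:
\[
\nabla^2 M(x,y)=\frac{2}{y}\begin{pmatrix}1\\ -x/y\end{pmatrix}\begin{pmatrix}1 & -x/y\end{pmatrix}.
\]
The linear part $-x+y$ contributes nothing to the Hessian.

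Several consequences follow immediately from this factorization. Since $y>0$, the quadratic form is $\frac{2}{y}\,(h_1 - \tfrac{x}{y}h_2)^2\ge 0$, so the Hessian is positive semidefinite. The trace is $2/y>0$ and the determinant is $\frac{4x^2}{y^4}-\frac{4x^2}{y^4}=0$, so the rank is exactly one at every interior point. The kernel is spanned by $(x,y)$, the radial direction; this matches the degree-one homogeneity of $x^2/y$, which is linear along rays through the origin. Convexity on the convex open set $(0,1)^2$ then follows from the standard second-order criterion for $C^2$ functions.

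It remains to show that $M$ is not strictly convex. I would exhibit an explicit segment along a ray: for $(x,y)=s(a,b)$ with $s$ in an interval keeping the points in $(0,1)^2$, we get $M=s\,(a^2/b - a + b)$, which is affine in $s$. So the strict convexity inequality fails on that segment. A positive semidefinite Hessian that is singular everywhere would not by itself be enough, so this explicit segment is what I would present.

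For the absence of interior critical points, I would solve $M_x=M_y=0$. The condition $M_y=0$ gives $x^2=y^2$, so $x=y$ on the positive quadrant, and then $M_x=2-1=1\ne 0$. Hence there are no interior critical points.

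The computations are routine. The only step needing care is the non-strict-convexity claim, which I would settle with the explicit ray argument rather than with the Hessian alone. I would also note that the formula $x^2/y+y-x$ matches \Cref{eq:Mutual_a_mat} only where $x\le y$; the proposition is stated for this formula on all of $(0,1)^2$, so I would prove it as stated.
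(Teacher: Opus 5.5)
Your proposal is correct and follows essentially the paper's route: the same second derivatives, the same rank-one factorization of the Hessian (your $\frac{2}{y}(1,-x/y)^{\top}(1,-x/y)$ is the paper's $\frac{2}{y^{3}}(y,-x)^{\top}(y,-x)$), and the same solution of $M_x=M_y=0$, which shows there are no interior critical points. The one difference works in your favor: the paper concludes non-strict convexity only from $\det H=0$, i.e.\ from the Hessian never being positive definite, which needs more justification because positive definiteness is only sufficient for strict convexity; your observation that $M(sa,sb)=s\,M(a,b)$ is affine along rays proves it directly.
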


\begin{proof}
\textit{First-order partial derivatives:}
We compute
\[
M_{x}=\frac{2x}{y}-1, 
\qquad 
M_{y}=1-\frac{x^{2}}{y^{2}}.
\]
Hence,
\[
M_{x}
\begin{cases}
<0 &\text{if } x<\frac{y}{2},\\[4pt]
=0 &\text{if } x=\frac{y}{2},\\[4pt]
>0 &\text{if } x>\frac{y}{2},
\end{cases}
\qquad
M_{y}
\begin{cases}
>0 &\text{if } x<y,\\[4pt]
=0 &\text{if } x=y,\\[4pt]
<0 &\text{if } x>y.
\end{cases}
\]
\textit{Critical points:}
Interior stationary points must satisfy
\[
M_{x}=0, \qquad M_{y}=0.
\]
These yield $x = y/2$ and $x = y$, respectively, implying $x=y=0$, which lies on the boundary of $[0,1]^{2}$. Thus, $M$ has no critical point in the interior~$(0,1)^2$.\\
\textit{Second-order derivatives and Hessian:}
We compute
\[
M_{xx}=\frac{2}{y},\qquad 
M_{xy}=M_{yx}=-\frac{2x}{y^{2}},\qquad 
M_{yy}=\frac{2x^{2}}{y^{3}}.
\]
Thus the Hessian is
\[
H(x,y)=
\begin{pmatrix}
\frac{2}{y} & -\frac{2x}{y^{2}} \\[6pt]
-\frac{2x}{y^{2}} & \frac{2x^{2}}{y^{3}}
\end{pmatrix}
=
\frac{2}{y^{3}}
\begin{pmatrix}
y \\[4pt]
- x
\end{pmatrix}
\begin{pmatrix}
y & -x
\end{pmatrix}.
\]
This factorization shows that $H(x,y)$ is positive semidefinite, since for any $v\in\mathbb{R}^{2}$,
$
v^{\top}H(x,y)v
=\frac{2}{y^{3}}\bigl(v_{1}y - v_{2}x\bigr)^{2} \ge 0.
$
Moreover, $\det(H)=0$ everywhere on $(0,1)^2$, so the Hessian has rank one and is never positive definite. Therefore $M$ is convex but not strictly convex. Since $M$ is convex (but not strictly convex) and has no interior critical points.
\end{proof}

\subsection{Equilibrium Conditions}
\begin{proof} [Proof of Theorem~\ref{thm:equilibrum}]

Let \( \chi_k(t) \) be the indicator random variable for whether the entrant at time \( t \) chooses group \( k \):
\[
\chi_k(t) =
\begin{cases}
1, & \text{if group } k \text{ is chosen} \\
0, & \text{otherwise}
\end{cases}
\]
Then the expected value \cite{ross2014introduction} of \( \chi_k(t) \) is simply the probability of choosing group \( k \): $\mathbb{E}[\chi_k(t)] = p_{k}(t)$\\
The group size \( n_k(t) \) is updated as:
\[
n_k(t+1) = n_k(t) + \chi_k(t)
\]
Taking expectation on both sides gives:
\[
\mathbb{E}[n_k(t+1)] = n_k(t) + \mathbb{E}[\chi_k(t)] = n_k(t) + p_{k}(t) \]
This gives the expected update rule:
\[
n_k(t+1) = n_k(t) + p_{k}(t) \]

This allows the study of the average system behavior, such as convergence and equilibrium, without tracking individual random events.
\[
n_k(t+1) = n_k(t) + p_{k}(t), \qquad N(t+1) = N(t) + 1
\]
Thus, the group proportion evolves as:
\[
\pi_k(t+1) = \frac{n_k(t) + p_{k}(t)}{N(t) + 1} = \frac{N(t)\pi_k(t) + p_{k}(t)}{N(t) + 1}
\]
We can rewrite this as:
\[
\pi_k(t+1) = \pi_k(t) + \frac{p_{k}(t) - \pi_k(t)}{N(t) + 1}
\]
This has the form of a stochastic approximation:
\[
\pi_k(t+1) = \pi_k(t) + \eta_t (p_{k}(t) - \pi_k(t)), \quad \eta_t = \frac{1}{N(t) + 1}
\]
Which approximates the differential equation:
$
\frac{d\pi_k}{dt} = p_{k}(t) - \pi_k
$
We seek a fixed point $\pi_k^*$ such that:
\[
\frac{d\pi_k}{dt} = 0 \quad \Rightarrow \quad p_{k}(t) = \pi_k^*
\]
Using the definition of $p_{k}$, this becomes:
$
\pi_k^* = \frac{\theta \pi_k^{*-\beta} + \epsilon_k}{\sum\limits_{j=1}^K (\theta \pi_j^{*-\beta} + \epsilon_j)}
$\\
\boxed{Assume\,symmetry}: $\epsilon_k = \epsilon$ for all $k$.
$\\
\text{LHS: } \pi_k^* = \frac{1}{K}
$\\
$
\text{RHS: } \frac{\theta \left(\frac{1}{K}\right)^{-\beta} + \epsilon}{K \cdot \theta \left(\frac{1}{K}\right)^{-\beta} + K\epsilon}
= \frac{\theta K^\beta + \epsilon}{K (\theta K^\beta + \epsilon)} = \frac{1}{K}
$.\\
So $\pi_k^* = \frac{1}{K}$ is a valid fixed point.
The proportions $\pi_k(t) = \frac{n_k(t)}{N(t)}$ follow a stochastic process that converges to a fixed point. Under symmetric $\epsilon_k = \epsilon$, the unique fixed point is:
    \[
    \pi_k^* = \frac{1}{K}, \quad \forall k
    \]
\boxed{Assume \, asymmetry:} The equilibrium is globally stable: if perturbed, the system returns to uniform distribution. Now let $\epsilon_k$ differ across groups. Then:
\[
p_{k}(\pi) = \frac{\theta \pi_k^{-\beta} + \epsilon_k}{\sum_{j=1}^K (\theta \pi_j^{-\beta} + \epsilon_j)}.
\]
At equilibrium, we solve for $\pi_k^*$ such that:
\[
\pi_k^* = p_{k}(\pi^*) = \frac{\theta \pi_k^{* -\beta} + \epsilon_k}{\sum_{j=1}^K (\theta \pi_j^{* -\beta} + \epsilon_j)}.
\]
Let $S = \sum\limits_{j=1}^K (\theta \pi_j^{* -\beta} + \epsilon_j)$. Then:
$
\pi_k^* S = \theta \pi_k^{* -\beta} + \epsilon_k \quad \text{implies} \quad \pi_k^* S - \theta \pi_k^{*-\beta} = \epsilon_k.
$
This is a system of $K$ nonlinear equations in $\pi_1^*, \dots, \pi_k^*$. It can be solved numerically.\\
Let analyze First-Order Approximation of Stationary Distribution under asymmetric \(\epsilon_k\). We consider a system of \( K \) groups where each group has a time-evolving proportion \( \pi_k(t) \), and entrants choose a group based on the probability is Equation \Cref{eq:choosing_prob} with \( \beta > 0 \), and group-specific bias terms \( \epsilon_k > 0 \). We aim to approximate the stationary distribution \( \pi_k^* \) under small perturbations in \( \epsilon_k \).
Assume a small perturbation around the symmetric equilibrium:
\[
\pi_k^* = \frac{1}{K} + \delta_k, \quad \text{with } \sum_{k=1}^K \delta_k = 0
\]
\[
\epsilon_k = \epsilon + \eta_k, \quad \text{with } \sum_{k=1}^K \eta_k = 0
\]
We expand the fixed point equation around \( \pi_k^* = \frac{1}{K} \).
The fixed point satisfies: $\pi_k^* \cdot S = \theta \pi_k^{*-\beta} + \epsilon_k$
where $ S = \sum\limits_{j=1}^K \left[ \theta \pi_j^{*-\beta} + \epsilon_j \right]$.
We expand \( \pi_k^{*-\beta} \) using a first-order Taylor approximation:
\[
\pi_k^{*-\beta} = \left( \frac{1}{K} + \delta_k \right)^{-\beta}
\approx K^{\beta} (1 - \beta K \delta_k)
\]
Next, we expand \( S \):
$= \sum\limits_{j=1}^K \left[ \theta \pi_j^{*-\beta} + \epsilon_j \right]= \sum\limits_{j=1}^K \left[ \theta K^{\beta} (1 - \beta K \delta_j) + \epsilon + \eta_j \right]
= K(\theta K^{\beta} + \epsilon)$\\
Left-hand side (LHS):
$
\pi_k^* S = \left( \frac{1}{K} + \delta_k \right)(\theta K^{\beta+1} + K \epsilon)
= \theta K^{\beta} + \epsilon + \theta K^{\beta+1} \delta_k + K \epsilon \delta_k
$\\
Right-hand side (RHS):
$
\theta \pi_k^{*-\beta} + \epsilon_k \approx \theta K^{\beta}(1 - \beta K \delta_k) + \epsilon + \eta_k
= \theta K^{\beta} - \theta \beta K^{\beta+1} \delta_k + \epsilon + \eta_k
$\\
Subtracting LHS - RHS gives:
\[
\delta_k = \frac{\eta_k}{\theta K^{\beta+1}(1 + \beta) + K \epsilon}
\]
The first-order approximation of the equilibrium proportion is:
$
\pi_k^* = \frac{1}{K} + \frac{\eta_k}{\theta K^{\beta+1}(1 + \beta) + K \epsilon}
$\\
that is:
$
\pi_k^* =\frac{1}{K}+\frac{\eta_k}{\theta (\beta+1)} K^{-(\beta+1)}+O(K^{-(2\beta+1)})
$
This shows that groups with higher \( \epsilon_k \) values have proportionally larger long-run group shares, and the influence is modulated by \( \theta \), \( \beta \), \( \epsilon \), and the number of groups \( K \).
\end{proof}

\subsection{Smooth Solution:}
\begin{thm}
    \label{eq:analytic_Sol}
    Let the differential equation is: $\frac{d\pi_k}{dt} = p_{k}(\pi(t)) - \pi_k(t)$.
The solution can be decomposed using the eigendecomposition of $(J - I)$: $\pi_k(t)=\pi^*(t)+y_k(t)$
where \, $y_k(t) = \sum\limits_{i=1}^n c_i v_k^{(i)} e^{\lambda_i t}$
with $\lambda_i$ are the eigenvalues, $v^{(i)}$ are the associated eigenvectors, and $c_i$ are constants determined by the initial perturbation $\mathbf{y}(0)$.
\end{thm}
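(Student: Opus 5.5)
The plan is to linearize the ODE $\frac{d\pi_k}{dt} = p_k(\pi(t)) - \pi_k(t)$ around a fixed point $\pi^*$ and then solve the resulting linear system by eigendecomposition. Since $\pi^*$ is a fixed point, $\pi^*(t)\equiv\pi^*$ is constant. By Theorem~\ref{thm:analysis}(ii) it satisfies $\pi_k^* = p_k(\pi^*)$, and in the symmetric case it is $\pi^*_k = 1/K$ by Theorem~\ref{thm:equilibrum}. I will therefore read the statement as a description of the solution of the linearized flow. The resulting decomposition is then exact for that flow and a first-order approximation for the nonlinear one.

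\emph{Step 1 (linearization).} Write $\pi(t)=\pi^*+\mathbf{y}(t)$. On the interior of the simplex, where $\pi_k>\delta>0$, the map $p(\pi)$ is smooth; this is the same regularity already used in the Lipschitz-drift assumption in the proof of Theorem~\ref{thm:analysis}. Let $J=Dp(\pi^*)$ be the Jacobian of $p$ at $\pi^*$. A Taylor expansion gives $p(\pi^*+\mathbf{y}) = \pi^* + J\mathbf{y} + O(\|\mathbf{y}\|^2)$. Substituting into the ODE yields
\[
\dot{\mathbf{y}} = (J-I)\mathbf{y} + O(\|\mathbf{y}\|^2).
\]
Dropping the quadratic remainder leaves the linear system $\dot{\mathbf{y}}=(J-I)\mathbf{y}$.

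\emph{Step 2 (eigendecomposition).} Assume $J-I$ is diagonalizable, with eigenpairs $(\lambda_i,v^{(i)})$ that may be complex. Expand the initial perturbation in this basis as $\mathbf{y}(0)=\sum_i c_i v^{(i)}$; the coefficients are $c=V^{-1}\mathbf{y}(0)$, where $V$ is the matrix whose columns are the $v^{(i)}$. Each mode then evolves as $\dot{c}_i=\lambda_i c_i$, so by uniqueness of solutions of linear ODEs
\[
\mathbf{y}(t)=e^{(J-I)t}\mathbf{y}(0)=\sum_i c_i e^{\lambda_i t}v^{(i)}.
\]
Reading off the $k$-th component gives the stated formula. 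Two remarks complete this step. First, since $\sum_k p_k=1$, the vector $\mathbf{1}^\top$ satisfies $\mathbf{1}^\top J=0$; hence the tangent space $\{\sum_k y_k=0\}$ is invariant under $J-I$, and perturbations that stay in the simplex remain in it. Second, complex eigenvalues occur in conjugate pairs, so for real $\mathbf{y}(0)$ the sum is real. This produces the node and spiral cases discussed before Theorem~\ref{thm:analysis}.

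\emph{Step 3 (main obstacle).} The hard part is justifying that the linear solution describes the nonlinear one, and handling the case where $J-I$ is not diagonalizable. For the latter I would replace the sum by the Jordan form of $e^{(J-I)t}$, whose terms carry polynomial factors $t^m e^{\lambda_i t}$, and note that the stated formula is the generic case. For the former, I would invoke the Hartman--Grobman theorem when $\pi^*$ is hyperbolic, meaning $\mathrm{Re}\,\lambda_i\neq0$. When all $\mathrm{Re}\,\lambda_i<0$, I would use a Gr\"onwall estimate instead: it gives $\|\pi(t)-\pi^*-\mathbf{y}(t)\|=O(\|\mathbf{y}(0)\|^2)$ uniformly in $t$ for small $\mathbf{y}(0)$. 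In the symmetric case $J$ can be computed explicitly, and I expect it to be of the form $aI+b\mathbf{1}\mathbf{1}^\top$. Its eigenvalues can then be read off directly, which would confirm diagonalizability there.
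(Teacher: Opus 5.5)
Your proposal is correct and follows the same route as the paper: write $\pi=\pi^*+\mathbf{y}$, linearize $p$ at the fixed point to get $\dot{\mathbf{y}}=(J-I)\mathbf{y}$, and expand $e^{(J-I)t}\mathbf{y}(0)$ in the eigenvectors of $J-I$; your additions (constant $\pi^*$, the diagonalizability assumption with the Jordan-form fallback, invariance of $\{\sum_k y_k=0\}$ via $\mathbf{1}^\top J=0$, and the caveat that the formula is exact only for the linearized flow) make precise what the paper leaves implicit. One caution on your smoothness remark: it holds for the constant-$\theta$ form of $p$ in Theorem~\ref{thm:analysis}, but not for the full model of Section~\ref{sec:Model}, because $M(x,y)=\tfrac{x+y}{2}-\tfrac{|x-y|}{2}+\tfrac{(x-y)^2}{\max\{x,y\}}$ has a kink along $x=y$, so for $K\ge 3$ the Jacobian $J$ that the statement presupposes does not exist at the symmetric fixed point.
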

\begin{proof}

The given differential equation is: $\frac{d\pi_k}{dt} = p_{k}(\pi(t)) - \pi_k(t)$
Here, \( \pi(t) = (\pi_1(t), \pi_2(t), \dots, \pi_n(t)) \) is a vector-valued function, and \( p_{k}(\pi(t)) \) is some function of \( \pi(t) \). This is a first-order ordinary differential equation (ODE). 
To solve it, we can rewrite it in the standard linear form:
$
\frac{d\pi_k}{dt} + \pi_k(t) = p_{k}(\pi(t))
$
This is a linear ODE in \( \pi_k(t) \), but since \( p_{k}(\pi(t)) \) may depend on other components of \( \pi(t) \), the system might be coupled (i.e., the equations for different \( \pi_k \) may depend on each other). 
Consider the coupled system of ordinary differential equations (ODEs):
\[
\frac{d\pi_k}{dt} = p_{k}(\pi(t)) - \pi_k(t), \quad k = 1, 2, \dots, n
\]
where \(\pi(t) = (\pi_1(t), \pi_2(t), \dots, \pi_k(t))\) and each \(p_{k}(\pi(t))\) depends on the full state \(\pi(t)\). \\
Let \(\pi(t) = \begin{bmatrix} \pi_1(t) \\ \pi_2(t) \\ \vdots \\ \pi_n(t) \end{bmatrix}\) and \(\bm{p}(\pi) = \begin{bmatrix} p_1(\pi) \\ p_2(\pi) \\ \vdots \\ p_n(\pi) \end{bmatrix}\). 
The system becomes:
$
\frac{d\pi}{dt} = \bm{p}(\pi) - \pi
$
This is a nonlinear coupled ODE system because \(\bm{p}(\pi)\) depends on \(\pi\).
Find the equilibrium points \(\pi^*\) where \(\frac{d\pi}{dt} = \bm{0}\):
\[
\bm{p}(\pi^*) - \pi^* = \bm{0} \implies \pi^* = \bm{p}(\pi^*)
\]
This is a fixed-point equation. Numerical methods (e.g., Newton-Raphson) may be required if no analytical solution exists.
For small perturbations \(\bm{y}(t)\) around \(\pi^*\), let \(\pi(t) = \pi^* + \bm{y}(t)\). Linearizing $\bm{p}(\pi) \approx \bm{p}(\pi^*) + J(\pi^*) \bm{y}$, where \(J(\pi^*)\) is the Jacobian matrix of \(\bm{p}\) evaluated at \(\pi^*\): $
J_{ij} = \left. \frac{\partial p_i}{\partial \pi_j} \right|_{\pi = \pi^*}$. Substituting into the ODE:
\[
\frac{d\bm{y}}{dt} = \bm{p}(\pi^*) + J \bm{y} - (\pi^* + \bm{y}) = (J - I) \bm{y}
\]
The solution to this linear system is:
$
\bm{y}(t) = e^{(J - I)t} \bm{y}(0)
$
The solution can be decomposed using the eigendecomposition of $(J - I)$:
$
\pi_k(t)=\pi^*(t)+y_k(t)\, \text{where} \, y_k(t) = \sum_{i=1}^n c_i v_k^{(i)} e^{\lambda_i t},
$
where $\lambda_i$ are the eigenvalues, $v^{(i)}$ are the associated eigenvectors, and $c_i$ are constants determined by the initial perturbation $\mathbf{y}(0)$.
\end{proof}

\bibliographystyle{plain}
\bibliography{refrences}

@article{schelling1971,
  title={Dynamic models of segregation},
  author={Schelling, Thomas C.},
  journal={Journal of Mathematical Sociology},
  volume={1},
  number={2},
  pages={143--186},
  year={1971},
  publisher={Taylor \& Francis}
}

@article{barabasi1999,
  title={Emergence of scaling in random networks},
  author={Barab{\'a}si, Albert-L{\'a}szl{\'o} and Albert, R{\'e}ka},
  journal={Science},
  volume={286},
  number={5439},
  pages={509--512},
  year={1999},
  publisher={American Association for the Advancement of Science}
}

@article{degroot1974,
  title={Reaching a consensus},
  author={DeGroot, Morris H.},
  journal={Journal of the American Statistical Association},
  volume={69},
  number={345},
  pages={118--121},
  year={1974},
  publisher={Taylor \& Francis}
}

@article{taylor1978,
  title={Evolutionary stable strategies and game dynamics},
  author={Taylor, Peter D. and Jonker, Leo B.},
  journal={Mathematical Biosciences},
  volume={40},
  number={1-2},
  pages={145--156},
  year={1978},
  publisher={Elsevier}
}

@article{hegselmann2002,
  title={Opinion dynamics and bounded confidence models, analysis, and simulation},
  author={Hegselmann, Rainer and Krause, Ulrich},
  journal={Journal of Artificial Societies and Social Simulation},
  volume={5},
  number={3},
  year={2002},
  note={Available at: \url{http://jasss.soc.surrey.ac.uk/5/3/2.html}}
}

@article{bikhchandani1992,
  title={A theory of fads, fashion, custom, and cultural change as informational cascades},
  author={Bikhchandani, Sushil and Hirshleifer, David and Welch, Ivo},
  journal={Journal of Political Economy},
  volume={100},
  number={5},
  pages={992--1026},
  year={1992},
  publisher={University of Chicago Press}
}

@book{ross2014introduction,
  title={Introduction to probability models},
  author={Ross, Sheldon M},
  year={2014},
  publisher={Academic press}
}

@article{zabarianska2024,
  author = {Zabarianska, Iryna and Proskurnikov, Anton V.},
  title = {Coalescing Force of Group Pressure: Consensus in Nonlinear Opinion Dynamics},
  year = {2024},
  journal = {arXiv preprint},
}

@article{sampson2024,
  author = {Sampson, Corbit R. and Porter, Mason A. and Restrepo, Juan G.},
  title = {Oscillatory and Excitable Dynamics in an Opinion Model with Group Opinions},
  year = {2024},
  journal = {arXiv preprint},
}

@article{pasimeni2025,
  author = {Pasimeni, Francesco and Wade, R. and Alkemade, F.},
  title = {Opinion Dynamic and Social Clustering in a 2D Space: An Agent Based Experiment},
  year = {2025},
  journal = {Computational Economics},
}

@article{albi2025,
  author = {Albi, Giacomo and Calzola, E. and Dimarco, G.},
  title = {A data-driven kinetic model for opinion dynamics with social network contacts},
  year = {2025},
  journal = {European Journal of Applied Mathematics},
}

@incollection{peralta2023,
  author = {Peralta, A. F. and Kertész, J. and {\'I}\~{n}iguez, G.},
  title = {Opinion dynamics in social networks: From models to data},
  year = {2023},
  booktitle = {Handbook of Computational Social Science},
  editor = {Yasseri, T.},
  publisher = {Edward Elgar Publishing},
}

@article{frontiers2025,
  author = {Anonymous},
  title = {Modeling social conformity and peer pressure in opinion dynamics: the role of dynamic interaction structures},
  year = {2025},
  journal = {Frontiers in Physics},
}

@book{sandholm2010population,
  title={Population Games and Evolutionary Dynamics},
  author={Sandholm, William H.},
  publisher={MIT Press},
  year={2010}
}

@article{pemantle2007survey,
  title={A survey of random processes with reinforcement},
  author={Pemantle, Robin},
  journal={Probability Surveys},
  volume={4},
  pages={1--79},
  year={2007}
}

@article{bandyopadhyay2018negative,
  title={Generalized Pólya urn schemes with negative but linear reinforcements},
  author={Bandyopadhyay, Antar and Kaur, Gursharn},
  journal={arXiv preprint arXiv:1809.08506},
  year={2018}
}

@phdthesis{kaur2019negatively,
  title={Negatively Reinforced Balanced Urn Schemes},
  author={Kaur, Gursharn},
  school={Indian Statistical Institute},
  year={2019}
}

@article{deffuant2000mixing,
  title={Mixing beliefs among interacting agents},
  author={Deffuant, Guillaume and Neau, David and Amblard, Frédéric and Weisbuch, Gérard},
  journal={Advances in Complex Systems},
  volume={3},
  number={1-4},
  pages={87--98},
  year={2000}
}

@article{hegselmann2002opinion,
  title={Opinion dynamics and bounded confidence models},
  author={Hegselmann, Rainer and Krause, Ulrich},
  journal={Journal of Artificial Societies and Social Simulation},
  volume={5},
  number={3},
  year={2002}
}

@book{friedkin2011social,
  title={Social Influence Network Theory: A Sociological Examination of Small Group Dynamics},
  author={Friedkin, Noah E. and Johnsen, Eugene C.},
  publisher={Cambridge University Press},
  year={2011}
}

@article{newman2006modularity,
  title={Modularity and community structure in networks},
  author={Newman, M. E. J.},
  journal={Proceedings of the National Academy of Sciences},
  volume={103},
  number={23},
  pages={8577--8582},
  year={2006}
}

@article{mas2013differentiation,
  title={Differentiation without distancing: Explaining bi-polarization of opinions},
  author={M{\"a}s, Michael and Flache, Andreas},
  journal={PLoS ONE},
  volume={8},
  number={11},
  pages={e74516},
  year={2013}
}

@article{christie2023nfds,
  title={Negative frequency-dependent selection: mechanisms and consequences},
  author={Christie, Mark R.},
  journal={Biological Reviews},
  year={2023}
}

@article{siew2020inverse,
  title={Investigating the influence of inverse preferential attachment},
  author={Siew, Cynthia S. Q. and Vitevitch, Michael S. and others},
  journal={Physica A: Statistical Mechanics and its Applications},
  volume={556},
  pages={124833},
  year={2020}
}

@article{mcfadden1974conditional,
  title={Conditional logit analysis of qualitative choice behavior},
  author={McFadden, Daniel},
  journal={Frontiers in Econometrics},
  pages={105--142},
  year={1974}
}

@article{mckelvey1995quantal,
  title={Quantal response equilibria for normal form games},
  author={McKelvey, Richard D. and Palfrey, Thomas R.},
  journal={Games and Economic Behavior},
  volume={10},
  number={1},
  pages={6--38},
  year={1995}
}

@article{blume1993statistical,
  title={The statistical mechanics of strategic interaction},
  author={Blume, Lawrence E.},
  journal={Games and Economic Behavior},
  volume={5},
  number={3},
  pages={387--424},
  year={1993}
}

@article{Benaim1996,
  author = {Bena\"{\i}m, Michel},
  title = {A dynamical system approach to stochastic approximations},
  journal = {SIAM Journal on Control and Optimization},
  year = {1996},
  volume = {34},
  number = {2},
  pages = {437--472},
  doi = {10.1137/S0363012993253534}
}

@book{KushnerYin2003,
  author = {Kushner, Harold J. and Yin, G. George},
  title = {Stochastic Approximation and Recursive Algorithms and Applications},
  year = {2003},
  edition = {2nd},
  publisher = {Springer},
  series = {Stochastic Modelling and Applied Probability, Vol.35},
  isbn = {9780387008943}
}

\end{document}